\definecolor{pred}{RGB}{148,55,61}
\newcommand{\ba}{\noindent $\begin{array}}
\newcommand{\ea}{\end{array}$}
\newcommand{\be}{\begin{equation}}
\newcommand{\ee}{\end{equation}}
\newcommand{\bd}{\begin{displaymath}}
\newcommand{\ed}{\end{displaymath}}
\newcommand{\beq}{\begin{eqnarray*}}
\newcommand{\eeq}{\end{eqnarray*}}
\newcommand{\beqn}{\begin{eqnarray}}
\newcommand{\eeqn}{\end{eqnarray}}
\def\hat{\widehat}
\def\[{\begin{equation}}
\def\]{\end{equation}}
\numberwithin{equation}{section}
\newtheorem{theorem}{Theorem}[section]
\newtheorem{proposition}{Proposition}[section]
\newtheorem{lemma}{Lemma}[section]
\title{{\Large\bf A Proximal Point Dual Newton Algorithm for Solving Group Graphical Lasso Problems}}
\date{July, 22, 2020}
\author{
{Yangjing Zhang}\thanks{Department of Mathematics, National University of Singapore, 10 Lower Kent Ridge Road, Singapore 119076 ({\tt zhangyangjing@u.nus.edu}). }
\and Ning Zhang\thanks{(Corresponding author) College of Computer Science and Technology, Dongguan University of Technology, Dongguan 523808, China;
Department of Applied Mathematics, The Hong Kong Polytechnic University, Hung Hom, Hong Kong, China ({\tt ningzhang\_2008@yeah.net}). This author is supported in part by the National Natural Science Foundation of China under Grant 11901083 }
\and Defeng Sun\thanks{Department of Applied Mathematics, The Hong Kong Polytechnic University, Hung Hom,  Hong Kong, China ({\tt defeng.sun@polyu.edu.hk}).
This author is supported in part by Hong  Kong  Research  Grant  Council  under Grant PolyU 153014/18P
}
\and Kim-Chuan Toh\thanks{Department of
Mathematics, and Institute of Operations Research and Analytics, National University of Singapore, 10 Lower Kent Ridge Road, Singapore 119076 ({\tt mattohkc@nus.edu.sg}). This author is supported in part by the Academic Research Fund of the Ministry of Education of Singapore under Grant R-146-000-257-112. }
}
\begin{document}
\maketitle
\vspace{2mm}
\begin{abstract}
Undirected graphical models have been especially popular for learning the conditional independence structure among a large number of variables where the observations are drawn independently and identically from the same distribution. However, many modern statistical problems would involve categorical data or time-varying data, which might follow different but related underlying distributions.  In order to learn a collection of related graphical models simultaneously, various joint graphical models inducing sparsity in graphs and similarity across graphs have been proposed. In this paper, we aim
to propose an implementable proximal point dual Newton algorithm
(PPDNA) for solving the group graphical Lasso model, which encourages a shared pattern of sparsity across graphs. Though the group graphical Lasso regularizer is non-polyhedral, the asymptotic superlinear convergence of our proposed method PPDNA can be obtained  by leveraging on the local Lipschitz continuity of the Karush-Kuhn-Tucker solution mapping associated with the group graphical Lasso model. A variety of numerical experiments on real data sets illustrates that the PPDNA for solving the group graphical Lasso model
can be highly efficient and robust.
\end{abstract}
\smallskip
{\small
\begin{center}
\parbox{0.95\hsize}{{\bf Keywords.}\;
Group Graphical Lasso, Proximal Point Algorithm, Semismooth Newton Method, Lipschitz Continuity}
\end{center}
\begin{center}
\parbox{0.95\hsize}{{\bf AMS Subject Classification.}\; 90C22, 90C25, 90C31, 62J10.}
\end{center}}

\section{Introduction}
\label{sec:intro}
Let $w^{(k)}\in\mathbb{R}^{n_k\times p},\,k=1,2,\ldots,K$ be $K$ given data matrices. For each  $k=1,2,\ldots,K$, the rows of $w^{(k)}$ are observations drawn independently from a Gaussian distribution with mean zero,
and the empirical covariance matrix for $w^{(k)}$ is given by $S^{(k)}=(1/n_k)(w^{(k)})^Tw^{(k)}$. In this paper, we consider the following joint graphical model:
\begin{equation}\label{model-MGL}
\min\limits_{\Theta}~\sum^K_{k=1} \Big(-\log \det \,\Theta^{(k)}+\langle S^{(k)},\Theta^{(k)} \rangle \Big)+ \mathcal{P}(\Theta),
\end{equation}
where $\Theta=\big(\Theta^{(1)},\Theta^{(2)},\dots,\Theta^{(K)}\big)\in \mathbb{S}^p\times\mathbb{S}^p\times\cdots\times\mathbb{S}^p$ is the decision variable, and $\mathcal{P}$ is a convex penalty term that can promote certain desired structure in the decision variable $\Theta$. Throughout this paper, we assume that the solution set to problem \eqref{model-MGL} is nonempty.

If $K=1$ and $\mathcal{P}(\cdot)=\lambda\|\cdot\|_1$, problem \eqref{model-MGL} reduces to the well-known sparse Gaussian graphical model which has been studied by various researchers (e.g.,  \cite{banerjee2008model,bollhofer2019large,friedman2008sparse,hsieh2014quic,rothman2008sparse,yang2013proximal,yuan2006model}).
In many applications, a single Gaussian graphical model is typically enough to capture the conditional independence structure of the random variables. However, in some situations it is more reasonable to fit a collection of such models jointly, due to the similarity or heterogeneity of the data involved. These models for estimating multiple precision matrices jointly are referred to as joint graphical  models in \cite{danaher2014joint}. A scenario where joint graphical models are more suitable than a single graphical model is when the data comes  from several distinct but closely related classes, which share the same collection of variables but differ in terms of the dependency structures. Their dependency graphs can have common edges across a portion of all classes and unique edges restricted to only certain classes. In this case, fitting separate graphical models for distinct classes does not exploit the similarity among the dependency graphs. In contrast, joint estimation of these models could exploit information across different but related classes. In addition to the data from different classes, another scenario that would favor joint graphical models over a single graphical model is when the data contains sequences of multivariate time-stamped observations. Such data might correspond to a series of dependency graphs over time.  Next, we give two practical applications of joint graphical models, which will also be used in our numerical experiments:
\begin{itemize}[topsep=1pt,itemsep=-.6ex,partopsep=1ex,parsep=1ex,leftmargin=3ex]
\item[-] The inference of words relationships from webpages or newsgroups: the webpages from the computer science departments of various universities are classified into several classes: Student, Faculty, Course, Project, etc. The 20 newsgroups are grouped into various topics.
\item[-] The inference of time-varying dependency structures of stocks: the dependency structures among the Standard \& Poor's 500 component stocks might change smoothly over time.
\end{itemize}
In summary, there are two major applications of the joint graphical models: (i) estimating multiple precision matrices jointly for a collection of variables across distinct classes; (ii) inferring the time-varying networks and finding the change-points.

For solving  problem \eqref{model-MGL} with different forms of penalty terms, the alternating direction method of multipliers (ADMM)  has been extensively used; see, e.g., \cite{danaher2014joint,gibberd2017regularized,hallac2017network}.
As we know, the ADMM could be a fast first order method for finding approximate solutions of low or moderate accuracy. However, for attaining superlinear convergence to compute highly accurate solutions, one has to incorporate at least in part the second order information of the problem. Yang et al. \cite{yang2015fused} proposed a proximal Newton-type method, where the subproblem in each iteration can be solved by the nonmonotone spectral projected gradient  method \cite{lu2012augmented,wright2009sparse}, and an active set identification scheme was applied to reduce the cost. Another notable contribution is that a screening rule, which can be combined with any method to reduce the computational cost, was proposed in \cite{yang2015fused}.
However, the second order method in \cite{yang2015fused} is not without drawbacks. Each of its subproblems is a complicated quadratic approximation problem, which generally requires expensive computations. Besides, the inexact proximal Newton-type method proposed in \cite{yang2015fused} has no guarantee of  local linear convergence.
It is worth noting that, in a recent paper related to \cite{yang2015fused}, Yue, Zhou, and So \cite{yue2019family} studied the local convergence rate of a family of  inexact proximal Newton-type methods for solving a class of nonsmooth convex composite optimization problems based on an error bound  condition.
However, it is not clear to us whether the convergence analysis in \cite{yue2019family} can be directly applied to problem \eqref{model-MGL} as the Hessian of the first function in the objective of the problem is not uniformly bounded on its effective domain.
More recently, Zhang et al. \cite{zhang2019efficiency} applied a regularized proximal point algorithm (rPPA) to solve  a fused multiple graphical Lasso (FGL) model and heavily exploited the underlying second order information through the semismooth Newton method when solving the subproblems of the rPPA. Due to the polyhedral property of the FGL regularizer, the rPPA for solving the FGL problem is proven to have an arbitrary linear convergence rate in \cite{zhang2019efficiency}.

Our goal in this paper is to design and analyze an efficient second order information based algorithm with economical implementations and a fast convergence rate for solving problem \eqref{model-MGL} with the following non-polyhedral regularizer, which was referred to as the group graphical Lasso (GGL) regularizer in \cite{danaher2014joint}:
\begin{equation}\label{regularizer-GGL}
\begin{array}{l}
\mathcal{P}(\Theta)\displaystyle = \lambda_1 \sum^K_{k=1} \sum_{i\neq j}|\Theta^{(k)}_{ij}| + \lambda_2 \sum_{i\neq j} \Big( \sum^{K}_{k=1} |{\Theta^{(k)}_{ij}}|^2 \Big)^{1/2},
\end{array}
\end{equation}
where $\lambda_1$ and $\lambda_2$ are positive parameters. We refer to model \eqref{model-MGL} with the regularizer \eqref{regularizer-GGL} as the GGL model. In fact, the GGL regularizer acting on a collection of matrices can be viewed as an extension of the sparse group Lasso regularizer \cite{friedman2010note,Simon2013} acting on a vector. The former can be regarded as the latter if the $(i,j)$-th elements across all $K$ precision matrices are assigned into one group. For $1\leq i,j\leq p$, we let $\Theta_{[ij]} :=[\Theta^{(1)}_{ij};\ldots;\Theta^{(K)}_{ij}]\in\mathbb{R}^K$ be the column vector obtained by taking out the $(i,j)$-th elements across all $K$ matrices $\Theta^{(k)},\,k=1,2,\dots,K$. We can observe that
\begin{equation}\label{regularizer-GGL-v1}
\mathcal{P}(\Theta)=\sum_{i\neq j} \varphi(\Theta_{[ij]})\,\, \hbox{with}\,\, \varphi(x)=\lambda_1\|x\|_1+\lambda_2\|x\|,\,\,\forall\, x\in\mathbb{R}^K,
\end{equation}
where the function $\varphi$ is actually a special sparse group Lasso regularizer.
The first term of the GGL regularizer promotes  sparsity
in the $K$ estimated precision matrices $\Theta^{(k)}$'s.  The zeros in these  precision matrices tend to occur at the same indices  due to the second term of the GGL regularizer. In addition, Figure \ref{fig-GGLReg} illustrates the structure of the decision variable $\Theta$ and the vector belonging to one group $\Theta_{[ij]}$.
\begin{figure}
  \centering
  \includegraphics[width=0.8\textwidth]{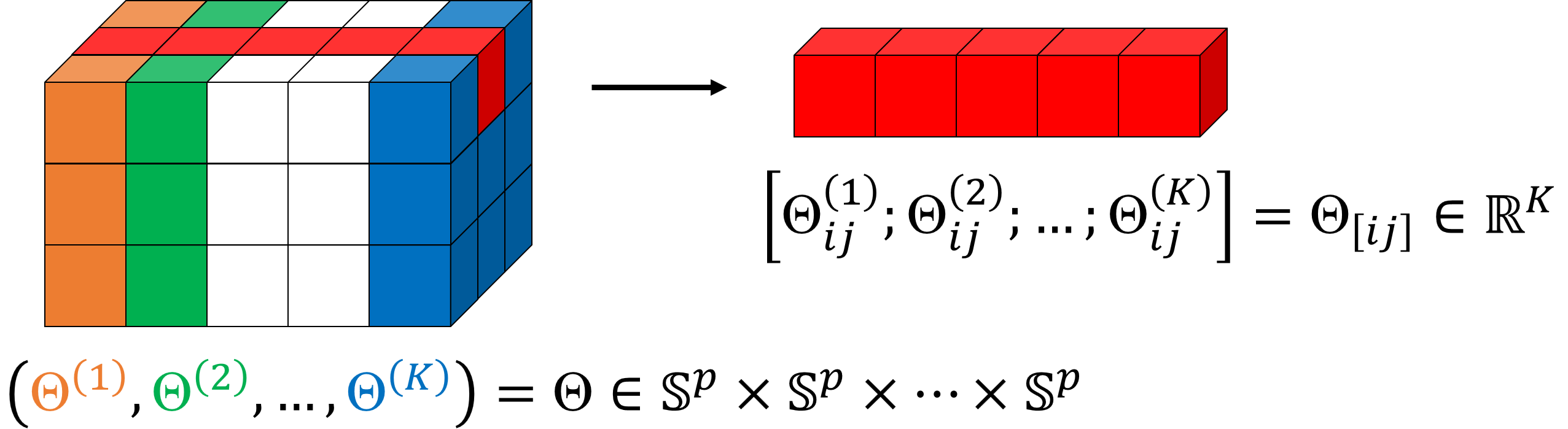}
  \caption{Illustration of $\Theta$ and $\Theta_{[ij]}$. One cube stands for one entry.}\label{fig-GGLReg}
\end{figure}

Inspired by the impressive numerical performance of the rPPA for solving  {the FGL model} \cite{zhang2019efficiency}, we will design a proximal point dual Newton algorithm (PPDNA) for solving the GGL model.
Specifically, a proximal point algorithm (PPA) \cite{rockafellar1976monotone} is applied to the primal formulation of the GGL model, and a superlinearly convergent semismooth Newton   method is designed to solve the dual formulations of the PPA subproblems.
Thanks to the fact that the GGL regularizer is an extension of the sparse group Lasso regularizer, the generalized Jacobian of the proximal mapping of the GGL regularizer  can be characterized based on that of the sparse group Lasso regularizer, where the explicit form was given in \cite{Zhang2018efficient}.
As a result, the former naturally inherits
the structured sparsity (referred to as the second order sparsity) of the latter. Consequently, multiplying a  sparse Hessian matrix by a vector in the semismooth Newton method is reasonably cheap, and one could expect that the superlinearly convergent semismooth Newton method is numerically efficient for solving the PPA subproblems. In addition to achieving low cost in computing the semismooth Newton directions by exploiting the second order sparsity, we also establish the linear convergence guarantee of the PPDNA.

Though the framework of the PPDNA for solving the GGL model is closely related to the rPPA for solving the FGL model \cite{zhang2019efficiency} and the semismooth Newton based augmented Lagrangian method (S{\footnotesize{SNAL}}) for solving the sparse group Lasso problems \cite{Zhang2018efficient},       both the theoretical analysis and numerical implementation should be further investigated owing to the following difficulties of the GGL model. First, unlike the FGL regularizer, the GGL regularizer is a non-polyhedral function and consequently the Lipschitz continuity of the Karush-Kuhn-Tucker (KKT) solution mapping associated with the GGL model is not as straightforward to establish as in \cite{zhang2019efficiency}.
We should mention here that the Lipschitz continuity of the KKT solution mapping plays an important role in establishing the convergence rate of the PPDNA, just as in the case of rPPA and S{\footnotesize{SNAL}}. Second, the subproblem of the PPDNA for solving the GGL model differs from those of the S{\footnotesize{SNAL}} and rPPA which are strongly convex. Therefore, the stopping criteria previously used
in S{\footnotesize{SNAL}} and rPPA are no longer applicable. The main contributions of this paper can be summarized as follows.

\begin{itemize}[topsep=2pt,itemsep=-.6ex,partopsep=1ex,parsep=1ex,leftmargin=3ex]
\item[1.] We prove the Lipschitz continuity of the KKT solution mapping associated with the GGL model, by taking  advantage of the strict convexity of the function $-\log \det(\cdot)$
    in its effective domain, the nonsingularity of  its Jacobian, and Clarke's implicit function theorem \cite{clarke1990optimization,clarke1998nonsmooth}. Consequently,
    the linear convergence of the iterative sequence generated by the PPDNA can be established based on the classical results in \cite{rockafellar1976monotone}. Moreover, by choosing the penalty parameter to be sufficiently large, the PPDNA can be made to attain any desired linear convergence rate. More generally, the Lipschitz continuity of the KKT solution mapping of the model still holds even if the GGL regularizer is replaced by any other convex positively homogeneous function.
\item[2.] We derive a surrogate generalized Jacobian of the proximal mapping of the GGL regularizer. The second order sparsity in the surrogate generalized Jacobian is analyzed in depth and fully exploited in the PPDNA. Therefore, the superlinearly (or even quadratically) convergent semismooth Newton method can solve the PPA subproblems very efficiently since the semismooth Newton directions can be computed cheaply.

\item[3.] We introduce fairly easy-to-check stopping criteria (via the duality theory)
for computing inexact solutions of the PPA subproblems without sacrificing the global or linear convergence of the PPDNA. In fact, the standard stopping criteria adopted by Rockafellar \cite{rockafellar1976monotone} would involve the unknown optimal values of the subproblems, which are not easy to  check unless the objective function is strongly convex with an explicitly given strong convexity parameter.
\end{itemize}
The remaining parts of the paper are organized as follows. Section \ref{sec:pre} presents some definitions and preliminary results, which include the proximal mapping of the GGL regularizer, its generalized Jacobian, the proximal mapping of the log-determinant function and its {derivative}. We analyze in section \ref{sec:kkt} the
Lipschitz continuity of the KKT solution mapping associated with the GGL model, which is
the key property for deriving the linear convergence rate of our proposed algorithm. In section \ref{sec:ppdna}, we propose the PPDNA
for solving the GGL model and investigate its convergence properties. We report the numerical performance of the PPDNA on categorical text data and time-varying stock prices data  in section \ref{sec:numerical} and conclude the paper in section \ref{sec:conclusion}.

\vspace{2mm}
\noindent
{\it Notation.} The following notation will be used in the rest of the paper.
\vspace{1mm}
\begin{itemize}[topsep=1pt,itemsep=-.6ex,partopsep=1ex,parsep=1ex,leftmargin=3ex]
\item $\mathbb{S}^p_+$ ($\mathbb{S}^p_{++}$) denotes the cone of positive semidefinite (definite) matrices in the space of $p\times p$ real symmetric matrices $\mathbb{S}^p$. For any $A,\,B\in \mathbb{S}^p$, we write $A \succeq B$ if $A-B \in \mathbb{S}^p_+$ and $A \succ B$ if $A-B \in \mathbb{S}^p_{++}$. In particular, $A \succeq 0$ ($A\succ 0$) indicates that $A \in \mathbb{S}^p_+$ ($A \in \mathbb{S}^p_{++}$).

\item We let ${\mathbb{Z}}\;({\mathbb{Z}}_+,\;{\mathbb{Z}}_{++})$ be the Cartesian product of $K$ copies of $\mathbb{S}^p \;(\mathbb{S}^p_+,\,\mathbb{S}^p_{++})$.

\item For any matrix $A$, $A_{ij}$ denotes the $(i,j)$-th element of $A$.

\item For any $X:= (X^{(1)},\dots,X^{(K)})\in\mathbb{Z}$, $X_{[ij]} :=[X^{(1)}_{ij};\ldots;X^{(K)}_{ij}]\in\mathbb{R}^K$ denotes the column vector obtained by taking out the $(i,j)$-th elements across all $K$ matrices $X^{(k)},\,k=1,2,\dots,K$.
\item $I_n$  {denotes} the $n\times n$ identity matrix, and $I$  {denotes an} identity matrix or map when the dimension is clear from the  {context.}
\item We use $\lambda_{\max}(\mathcal{A}) $ to denote the largest eigenvalue of a self-adjoint linear operator $\mathcal{A}$.
\item For a given closed convex set $\Omega$ and a vector $x$, we denote the Euclidean projection of $x$ onto $\Omega$ by $\Pi_{\Omega} (x):=\arg\min_{x'\in {\Omega}}\{\|x-x'\|\}$.
\item We denote ceil$(x)$ as the smallest integer greater than or equal to $x\in\mathbb{R}$.
\end{itemize}

\section{Preliminaries}\label{sec:pre}
In this section, we first recall the definition and some relevant properties of the Moreau-Yosida regularization of a proper and closed convex function, which will play an important role in the subsequent theoretical analysis and  algorithmic design. Let $\mathcal{E}$ be a finite dimensional real Hilbert space and $g:\, \mathcal{E}\rightarrow{\mathbb{R}}\cup\{+\infty\}$ be a proper and closed convex function.
The Moreau-Yosida regularization  \cite{moreau1965proximite,yosida1964functional}  of $g$ is defined by
\begin{equation}\label{def-MY}
\Psi_{g}(u):=\min_{u'}\left\{g(u')+\frac{1}{2}\|u'-u\|^2\right\},\,\,\forall\, u\in \mathcal{E}
\end{equation}
and  the proximal mapping of $g$, the unique minimizer of \eqref{def-MY}, is given by
$$
{\rm Prox}_{g}(u) := \arg\min_{u'}\left\{g(u')+\frac{1}{2}\|u'-u\|^2\right\},\,\,\forall\,u\in\mathcal{E}.
$$
One critical property of the Moreau-Yosida regularization is that
$\Psi_{g}(\cdot)$ is a continuously differentiable convex function with the following gradient:
$$
\nabla \Psi_{g} (u) =u-{\rm Prox}_{g}(u),\,\,\forall\,u\in\mathcal{E}.
$$
In addition, the proximal mapping  satisfies
the following Moreau identity \cite[Theorem 31.5]{rockafellar2015convex}:
\begin{equation}\label{moreau-id}
{\rm Prox}_{\sigma g}(u)+\sigma{\rm Prox}_{\sigma^{-1}g^*}(u/\sigma)=u,\,\,\forall\,u\in\mathcal{E},\,\,\sigma>0,
\end{equation}
where $g^*$ is the conjugate function  of $g$ (see e.g.,  \cite{rockafellar2015convex} for its definition).

\subsection{Proximal mapping of the GGL regularizer and its generalized Jacobian}

We investigate in this section the proximal mapping of the GGL regularizer $\mathcal{P}$ in \eqref{regularizer-GGL} and its generalized Jacobian. Recall the function in \eqref{regularizer-GGL-v1}:
\begin{equation*}
\mathcal{P}(\Theta)=\sum_{i\neq j} \varphi(\Theta_{[ij]})\,\, \hbox{with}\,\, \varphi(x)=\lambda_1\|x\|_1+\lambda_2\|x\|,\,\forall\, x\in\mathbb{R}^K.
\end{equation*}
By definition, the proximal mapping of $\mathcal{P}$ is given as follows: for any $X\in \mathbb{Z}$,
\begin{equation}\label{prox-p-GGL}
\begin{array}{l}
 {\rm Prox}_{\mathcal{P}}(X)
 =
 \arg\underset{ \Theta \in \mathbb{Z} }{\min} \left\{ \mathcal{P}(\Theta) + \frac{1}{2}\|\Theta-X\|^2\right\}  \\[3mm]
 = \arg\underset{{\Theta \in \mathbb{Z}}}{\min}\left\{ \sum_{i\neq j} \left\{\varphi(\Theta_{[ij]})+\frac{1}{2}\|\Theta_{[ij]}-X_{[ij]}\|^2\right\} +\frac{1}{2}{\sum_{i}}\|\Theta_{[ii]}-X_{[ii]}\|^2\right\}.
\end{array}
\end{equation}
It is obvious that problem \eqref{prox-p-GGL} is separable for each vector $\Theta_{[ij]}\in\mathbb{R}^K$.
Therefore, for any $i,j\in\{1,2,\ldots,p\}$, the vector $({\rm Prox}_{\mathcal{P}}(X))_{[ij]}$,
 consisting of all entries of $ {\rm Prox}_{\mathcal{P}}(X) $ in the $(i,j)$-th position, is given explicitly by
\begin{equation}\label{rel-prox}
({\rm Prox}_{\mathcal{P}}(X))_{[ij]} =
\begin{cases}
 {\rm Prox}_{\varphi}(X_{[ij]}), & \hbox{\it {if } $ i\neq j$}, \\
 X_{[ii]}, & \hbox{\it {if } $i=j$}.
\end{cases}
\end{equation}
By this equation, one can compute ${\rm Prox}_{\mathcal{P}}$ via performing $p(p-1)/2$ computations of $ {\rm Prox}_{\varphi}$, and this task can be done in parallel.
Parts of the second order information of the underlying problem are contained in the generalized Jacobian of ${\rm Prox}_{\mathcal{P}}$, which  can be characterized by the generalized Jacobian of ${\rm Prox}_{\varphi}$ through using the relationship \eqref{rel-prox} between ${\rm Prox}_{\mathcal{P}}$ and ${\rm Prox}_{\varphi}$.
Fortunately, the generalized Jacobian of ${\rm Prox}_{\varphi}$ has been carefully investigated in \cite{Zhang2018efficient} and has an explicit expression.

Let the multifunction $\widehat{\partial}{\rm Prox}_{\varphi}:\,\mathbb{R}^K\rightrightarrows\mathbb{S}^{K}$ be the generalized Jacobian of $ {\rm Prox}_{\varphi} $. Directly from the formula (10) in \cite{Zhang2018efficient}, the multifunction $\widehat{\partial}{\rm Prox}_{\varphi}$ can be described as follows: for any $u\in\mathbb{R}^K$,
\begin{equation}\label{eq-prox-phi}
\begin{array}{l}
\widehat{\partial}{\rm Prox}_{\varphi}(u) \\[2mm]
= \left\{(I - \Sigma)\Lambda\in\mathbb{S}^K\big|\,
v = {\rm Prox}_{\lambda_1\|\cdot\|_1}(u),\, \Sigma \in \partial\Pi_{\mathbb{B}_{\lambda_2}}(v),\, \Lambda \in \partial {\rm Prox}_{\lambda_1\|\cdot\|_1}(u)
\right\},
\end{array}
\end{equation}
where $ {\mathbb{B}_{\lambda_2}} :=\{v\in{\mathbb{R}^K}\,|\,\|v\|\leq \lambda_2\}$, $\partial\Pi_{\mathbb{B}_{\lambda_2}}$ and $\partial{\rm Prox}_{\lambda_1\|\cdot\|_1}$ are the Clarke generalized Jacobians (see \cite[Definition 2.6.1]{clarke1990optimization} for the definition) of $\Pi_{\mathbb{B}_{\lambda_2}}$ and ${\rm Prox}_{\lambda_1\|\cdot\|_1}$, respectively.
 Therefore, the surrogate generalized Jacobian $\widehat{\partial}{\rm Prox}_{\mathcal{P}}(X):\,\mathbb{Z}\rightrightarrows\mathbb{Z}$ of ${\rm Prox}_{\mathcal{P}}$ at any given $X$ can be described as follows:
\begin{align}\label{jac-prox-p-GGL}
\left\{\begin{array}{l}
\mathcal{W}\in\widehat{\partial}{\rm Prox}_{\mathcal{P}}(X)\hbox{ if and only if $\exists$\;$M^{(ij)}\in\widehat{\partial}{\rm Prox}_{\varphi}(X_{[ij]})$ $\forall\; i < j$},\\[1mm]
\hbox{such that}\ (\mathcal{W}[Y])_{[ij]}=\left\{\begin{array}{ll}
M^{(ij)}Y_{[ij]}, & \hbox{if $ i < j$},\\[0.5mm]
Y_{[ii]}, & \hbox{if $i = j$},\\[0.8mm]
M^{(ji)}Y_{[ij]}, & \hbox{if $ j < i$},
\end{array}
\right.\,\,i,j=1,\ldots,p,\,\,\,\forall\, Y\in\mathbb{Z}.
\end{array}
\right.
\end{align}
The next proposition will explain why $\widehat{\partial}{\rm Prox}_{\mathcal{P}}(X)$ in \eqref{jac-prox-p-GGL} can be treated as the surrogate generalized Jacobian of ${\rm Prox}_{\mathcal{P}}$ at $X$.
Based on \cite[Theorem 3.1]{Zhang2018efficient}, one can easily prove the proposition. We omit the details here.
\begin{proposition}\label{prop-semismooth-GGL}
Let $\mathcal{P}$ be the GGL regularizer defined by \eqref{regularizer-GGL} and $X\in \mathbb{Z}$ be any given element.  The surrogate generalized Jacobian $\widehat{\partial}{\rm Prox}_{\mathcal{P}}(\cdot)$ defined in \eqref{jac-prox-p-GGL} is  nonempty compact valued and upper semicontinuous.
Any element in the set $\widehat{\partial}{\rm Prox}_{\mathcal{P}}(X)$ is a self-adjoint and positive semidefinite operator.
Moreover, we have that, for any $Y\to X$,
$$
{\rm Prox}_{\mathcal{P}}(Y) - {\rm Prox}_{\mathcal{P}}(X) - \mathcal{W}[Y-X] = O(||Y-X||^2),\,\,\forall\,\mathcal{W}\in\widehat{\partial}{\rm Prox}_{\mathcal{P}}(Y).
$$
\end{proposition}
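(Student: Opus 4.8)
The plan is to transfer each of the three assertions from the block mapping ${\rm Prox}_{\varphi}$ to the full mapping ${\rm Prox}_{\mathcal{P}}$, using the separable structure \eqref{rel-prox} together with the explicit block construction \eqref{jac-prox-p-GGL}. The engine throughout is \cite[Theorem 3.1]{Zhang2018efficient}, which furnishes, for ${\rm Prox}_{\varphi}$, exactly the block-level analogues of what we want: $\widehat{\partial}{\rm Prox}_{\varphi}(\cdot)$ is nonempty compact valued and upper semicontinuous, every $M\in\widehat{\partial}{\rm Prox}_{\varphi}(u)$ is symmetric positive semidefinite, and for $v\to u$ one has ${\rm Prox}_{\varphi}(v)-{\rm Prox}_{\varphi}(u)-M(v-u)=O(\|v-u\|^2)$ whenever $M\in\widehat{\partial}{\rm Prox}_{\varphi}(v)$. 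By \eqref{jac-prox-p-GGL}, an operator $\mathcal{W}\in\widehat{\partial}{\rm Prox}_{\mathcal{P}}(X)$ is encoded precisely by the tuple $(M^{(ij)})_{i<j}$ with $M^{(ij)}\in\widehat{\partial}{\rm Prox}_{\varphi}(X_{[ij]})$, so $\widehat{\partial}{\rm Prox}_{\mathcal{P}}(X)$ is the image of the finite product $\prod_{i<j}\widehat{\partial}{\rm Prox}_{\varphi}(X_{[ij]})$ under the linear map that assembles $\mathcal{W}$ from its blocks.

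For the set-valued properties, I would argue that a finite product of nonempty compact sets is nonempty and compact, and that its image under a continuous (here linear) assembly map is again nonempty and compact. Upper semicontinuity follows because $X\mapsto X_{[ij]}$ is linear and hence continuous, the composition of an upper semicontinuous compact-valued multifunction with a continuous map is upper semicontinuous, a finite product of such multifunctions is upper semicontinuous, and pushing forward through a continuous map preserves the property.

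For self-adjointness and positive semidefiniteness, I would compute in the inner product $\langle Y,Z\rangle=\sum_{a,b}\langle Y_{[ab]},Z_{[ab]}\rangle$ on $\mathbb{Z}$, the sum running over all ordered pairs $(a,b)$. Since the members of $\mathbb{Z}$ are symmetric, $Y_{[ab]}=Y_{[ba]}$, so the contributions of $(a,b)$ and $(b,a)$ both involve the same block $M^{(ij)}$ with $i=\min(a,b)$, $j=\max(a,b)$ acting on the same vector. Grouping the diagonal and the two off-diagonal halves gives $\langle\mathcal{W}[Y],Z\rangle=\sum_{a}\langle Y_{[aa]},Z_{[aa]}\rangle+2\sum_{i<j}\langle M^{(ij)}Y_{[ij]},Z_{[ij]}\rangle$. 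Because every $M^{(ij)}$ is symmetric, this expression is symmetric in $Y$ and $Z$, which yields self-adjointness; putting $Z=Y$ and invoking $M^{(ij)}\succeq0$ gives $\langle\mathcal{W}[Y],Y\rangle\geq0$. The only delicate point here is the index bookkeeping: one must pair the $(a,b)$ and $(b,a)$ entries correctly via the symmetry of $Y,Z$ and the convention in \eqref{jac-prox-p-GGL}, which is where the factor $2$ originates.

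For the second order expansion, let $E:={\rm Prox}_{\mathcal{P}}(Y)-{\rm Prox}_{\mathcal{P}}(X)-\mathcal{W}[Y-X]$ and inspect it blockwise through \eqref{rel-prox} and \eqref{jac-prox-p-GGL}. On each diagonal block both ${\rm Prox}_{\mathcal{P}}$ and $\mathcal{W}$ act as the identity, so $E_{[ii]}=0$ exactly. On an off-diagonal block (say $i<j$) the relevant quantity is ${\rm Prox}_{\varphi}(Y_{[ij]})-{\rm Prox}_{\varphi}(X_{[ij]})-M^{(ij)}(Y_{[ij]}-X_{[ij]})$ with $M^{(ij)}\in\widehat{\partial}{\rm Prox}_{\varphi}(Y_{[ij]})$, which is $O(\|Y_{[ij]}-X_{[ij]}\|^2)$ by the block-level expansion. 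Since there are only finitely many blocks and $\|Y_{[ij]}-X_{[ij]}\|\leq\|Y-X\|$, summing $\|E\|^2=\sum_{a,b}\|E_{[ab]}\|^2$ gives $\|E\|=O(\|Y-X\|^2)$. I expect the main obstacle to be the \emph{uniformity} of the constant in the $O(\cdot)$ term, since the estimate must hold simultaneously for every admissible $\mathcal{W}\in\widehat{\partial}{\rm Prox}_{\mathcal{P}}(Y)$ and every block selection; this uniformity is exactly what the locally uniform second order bound for ${\rm Prox}_{\varphi}$ supplies, reinforced by the compactness and upper semicontinuity established above, so a single constant is valid on a neighborhood of $X$.
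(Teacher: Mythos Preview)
Your proposal is correct and follows precisely the route the paper indicates: the paper's own proof simply reads ``Based on \cite[Theorem 3.1]{Zhang2018efficient}, one can easily prove the proposition. We omit the details here.'' You have supplied those omitted details---lifting the nonempty/compact/upper-semicontinuous, self-adjoint/PSD, and strong semismoothness properties from the blockwise ${\rm Prox}_{\varphi}$ to ${\rm Prox}_{\mathcal{P}}$ via the separable structure \eqref{rel-prox} and the assembly rule \eqref{jac-prox-p-GGL}---so there is nothing to add or correct.
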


\subsection{Properties of the log-determinant function}
In this subsection, we present some properties on the proximal mapping of the following log-determinant function $h$ and its derivative that are mainly adopted from the papers \cite{wang2010solving,yang2013proximal}:
\begin{equation}\label{def-logdetfct}
h(X):=\left\{\begin{array}{ll}
-\log \det \,X,\,& \hbox{if $ X\in\mathbb{S}^p_{++}$,}\\[1mm]
+\infty, & \hbox{otherwise}.
\end{array}\right.
\end{equation}
Let $\beta > 0$ be given. Define the following scalar functions:
\begin{equation*}
\phi^+_{\beta}(x):=(\sqrt{x^2+4\beta} + x)/2,\,\,
\phi^-_{\beta}(x):=(\sqrt{x^2+4\beta} - x)/2,\,\,\forall\, x\in\mathbb{R}.
\end{equation*}
In addition, for any $A\in \mathbb{S}^p$ with eigenvalue decomposition $A=Q{\rm Diag}(d_1,\ldots,d_p)Q^T$,
we define
\begin{equation*}
\phi^+_{\beta}(A):= Q{\rm Diag}(\phi^+_{\beta}(d_1),\dots,\phi^+_{\beta}(d_p))Q^T,\,\,\,
\phi^-_{\beta}(A):= Q{\rm Diag}(\phi^-_{\beta}(d_1),\dots,\phi^-_{\beta}(d_p))Q^T.
\end{equation*}
One can observe that $\phi^+_{\beta}(A)$ and $\phi^-_{\beta}(A)$ are positive definite for any $A\in\mathbb{S}^p$.
Using the functions defined above, the following two propositions give the proximal mapping of the log-determinant function $h$ and its derivative.
\begin{proposition}\cite[Proposition 2.3]{yang2013proximal}\label{prop:logdet}
Let $h$ be the log-determinant function defined by \eqref{def-logdetfct} and $\beta $ be a positive scalar. Then, {for any $A\in\mathbb{S}^p$}, it holds that
$$
\begin{array}{l}
\phi^+_{\beta}(A) =
{\rm Prox}_{\beta h}(A) = \underset{B \in \mathbb{S}^p_{++}}{\arg\min}
\big\{h(B) + \frac{1}{2\beta}\|B-A\|^2\big\},
\\[4mm]
\Psi_{\beta h}(A)
=\underset{B \in \mathbb{S}^p_{++}}{\min}
\left\{\beta h(B) + \frac{1}{2}\|B-A\|^2\right\}
=-\beta\log\det(\phi^+_{\beta}(A))+\frac{1}{2}\|\phi^-_{\beta}(A)\|^2.
\end{array}
$$
\end{proposition}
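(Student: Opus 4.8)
The plan is to solve the strongly convex proximal subproblem in closed form via spectral calculus. Write $f(B) := \beta h(B) + \frac{1}{2}\|B-A\|^2$. Since $-\log\det$ is a strictly convex barrier of the cone $\mathbb{S}^p_{++}$, the function $f$ is strongly convex and coercive on $\mathbb{S}^p_{++}$ (it tends to $+\infty$ both as $B$ approaches the boundary of $\mathbb{S}^p_+$ and as $\|B\|\to\infty$), so it admits a unique minimizer $B^\star$ lying in the \emph{open} cone $\mathbb{S}^p_{++}$. First I would argue that, because $B^\star$ is interior, it is characterized by the unconstrained stationarity condition $\nabla f(B^\star)=0$. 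Using $\nabla(-\beta\log\det B)=-\beta B^{-1}$, this reads
$$
B^\star - \beta (B^\star)^{-1} = A.
$$

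Next I would solve this matrix equation explicitly. Diagonalize $A = Q\,{\rm Diag}(d_1,\dots,d_p)\,Q^T$ and look for a solution of the form $B^\star = Q\,{\rm Diag}(b_1,\dots,b_p)\,Q^T$ with each $b_i>0$; this is legitimate because the equation forces $A = B^\star - \beta(B^\star)^{-1}$ to be a function of $B^\star$, so $A$ and $B^\star$ commute and hence are simultaneously diagonalizable. Substituting reduces the equation to the scalar quadratics $b_i^2 - d_i b_i - \beta = 0$ for each $i$, whose unique positive root is $b_i = \big(d_i + \sqrt{d_i^2+4\beta}\,\big)/2 = \phi^+_{\beta}(d_i)$, the negative root being discarded since $B^\star\succ 0$. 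Hence $B^\star = \phi^+_{\beta}(A)$, which identifies ${\rm Prox}_{\beta h}(A)=\phi^+_{\beta}(A)$ as claimed. Equivalently, and more quickly, one may simply verify that the candidate $\phi^+_{\beta}(A)$ satisfies the stationarity condition—using the scalar identity $\phi^+_{\beta}(d_i) - \beta/\phi^+_{\beta}(d_i)=d_i$ that follows directly from $b_i^2 - d_i b_i - \beta = 0$—and then invoke strong convexity to conclude it is the global minimizer.

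Finally I would obtain the Moreau-Yosida value by substituting $B^\star=\phi^+_{\beta}(A)$ into $f$:
$$
\Psi_{\beta h}(A) = -\beta\log\det(\phi^+_{\beta}(A)) + \frac{1}{2}\|\phi^+_{\beta}(A)-A\|^2.
$$
The key simplification is the scalar identity $\phi^+_{\beta}(x)-x = \big(\sqrt{x^2+4\beta}-x\big)/2 = \phi^-_{\beta}(x)$, which at the matrix level—both terms sharing the eigenbasis $Q$—gives $\phi^+_{\beta}(A)-A=\phi^-_{\beta}(A)$ and therefore $\|\phi^+_{\beta}(A)-A\|^2 = \|\phi^-_{\beta}(A)\|^2$, yielding the stated expression. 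The only point requiring genuine care is the justification that the minimizer remains in the open cone, so that the barrier is differentiable there and the unconstrained first-order condition is valid; this is exactly what the coercivity argument in the first step supplies, and all remaining manipulations are routine scalar-function computations.
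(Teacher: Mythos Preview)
Your argument is correct and complete: the coercivity/barrier argument ensures the minimizer lies in $\mathbb{S}^p_{++}$, the first-order condition $B^\star-\beta(B^\star)^{-1}=A$ follows, the spectral reduction to scalar quadratics yields $B^\star=\phi^+_\beta(A)$, and the identity $\phi^+_\beta(A)-A=\phi^-_\beta(A)$ gives the Moreau--Yosida value. One minor remark: the commutation argument (``$A$ is a function of $B^\star$, so they share an eigenbasis'') is fine but slightly indirect; as you note yourself, it is cleaner to simply verify that the candidate $\phi^+_\beta(A)$ satisfies the stationarity condition and invoke uniqueness.

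As for comparison with the paper: the paper does not supply its own proof of this proposition. It is stated with a citation to \cite[Proposition~2.3]{yang2013proximal} and used as a black box. Your derivation is therefore a self-contained proof of a result the paper merely imports, and it follows the standard route one finds in the cited source.
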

\begin{proposition}\label{phiprime}\cite[Lemma 2.1 (b)]{wang2010solving}
Let $\beta$ be a given positive scalar. The function $\phi^+_{\beta}:\,\mathbb{S}^p\to \mathbb{S}^p$ is continuously differentiable.
For any $A\in \mathbb{S}^p$ with eigenvalue decomposition $A=Q{\rm Diag}(d_1,\dots,d_p)Q^T$,
 the derivative $(\phi^+_{\beta})'(A)[B]$ at any $B\in\mathbb{S}^p$ is given by
$$(\phi^+_{\beta})'(A)[B] = Q(\Gamma \odot (Q^T B Q))Q^T,$$
where  $\Gamma\in\mathbb{S}^p$ is defined by
$$
\Gamma_{ij} = \frac{\phi^+_{\beta}(d_i)+\phi^+_{\beta}(d_j)}{({d_i^2+4\beta})^{1/2}+({d_j^2+4\beta})^{1/2}},\,\,i,j=1,2,\dots,p.
$$
\end{proposition}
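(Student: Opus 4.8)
The plan is to recognize $\phi^+_\beta$ as a \emph{primary matrix function} (a L\"{o}wner operator) generated by the scalar map $x\mapsto\phi^+_\beta(x)=(\sqrt{x^2+4\beta}+x)/2$, invoke the classical Daleckii--Krein differentiation formula for symmetric-matrix functions, and then reduce the entire task to a short algebraic simplification of a first divided difference. This both yields the Hadamard-product form of the derivative and pins down the closed form of $\Gamma$.

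First I would record the scalar regularity. Since $x^2+4\beta\geq 4\beta>0$ for all $x\in\mathbb{R}$, the radicand never vanishes, so $x\mapsto\sqrt{x^2+4\beta}$ is smooth and hence $\phi^+_\beta$ is real-analytic on $\mathbb{R}$, with $(\phi^+_\beta)'(x)=\tfrac12\big(x/\sqrt{x^2+4\beta}+1\big)$. Because the matrix extension $\phi^+_\beta(A)=Q{\rm Diag}(\phi^+_\beta(d_1),\dots,\phi^+_\beta(d_p))Q^T$ is precisely the primary matrix function attached to this $C^1$ scalar function, the Daleckii--Krein theorem guarantees that $\phi^+_\beta$ is continuously Fr\'echet differentiable on $\mathbb{S}^p$, and that for $A=Q{\rm Diag}(d_1,\dots,d_p)Q^T$ the derivative has the form
$$(\phi^+_\beta)'(A)[B]=Q\big(\Gamma\odot(Q^TBQ)\big)Q^T,\qquad B\in\mathbb{S}^p,$$
where $\Gamma\in\mathbb{S}^p$ is the matrix of first divided differences: $\Gamma_{ij}=(\phi^+_\beta(d_i)-\phi^+_\beta(d_j))/(d_i-d_j)$ when $d_i\neq d_j$ and $\Gamma_{ii}=(\phi^+_\beta)'(d_i)$ otherwise.

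It then remains only to rewrite these divided differences in the stated unified form. Writing $s_i:=(d_i^2+4\beta)^{1/2}$ and rationalizing gives $s_i-s_j=(s_i^2-s_j^2)/(s_i+s_j)=(d_i^2-d_j^2)/(s_i+s_j)$ when $d_i\neq d_j$, so that
$$\phi^+_\beta(d_i)-\phi^+_\beta(d_j)=\tfrac12\big[(s_i-s_j)+(d_i-d_j)\big]=\tfrac{d_i-d_j}{2}\cdot\frac{(d_i+s_i)+(d_j+s_j)}{s_i+s_j}.$$
Dividing by $d_i-d_j$ and using $\phi^+_\beta(d)=(d+s)/2$ yields $\Gamma_{ij}=(\phi^+_\beta(d_i)+\phi^+_\beta(d_j))/(s_i+s_j)$, exactly the asserted expression. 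For the diagonal case I would verify consistency directly: evaluating the same closed form at $i=j$ gives $\phi^+_\beta(d_i)/s_i=(d_i+s_i)/(2s_i)$, which equals $(\phi^+_\beta)'(d_i)$; hence the single formula $\Gamma_{ij}=(\phi^+_\beta(d_i)+\phi^+_\beta(d_j))/(s_i+s_j)$ correctly encodes both the off-diagonal divided difference and the diagonal derivative.

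I expect the only genuinely delicate point to be the clean invocation of Daleckii--Krein with the right regularity bookkeeping, namely that $C^1$ regularity of the scalar function upgrades to \emph{continuous} Fr\'echet differentiability of the matrix function, and that the nominally separate off-diagonal and diagonal entries of $\Gamma$ merge into one analytic expression. The divided-difference manipulation itself is routine, and the bound $x^2+4\beta>0$ conveniently removes any smoothness concern at the origin that a bare square root would otherwise introduce.
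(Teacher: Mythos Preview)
Your proposal is correct: the Daleckii--Krein formula is exactly the right tool, the divided-difference simplification via $s_i-s_j=(d_i^2-d_j^2)/(s_i+s_j)$ is clean, and the check that the unified closed form reproduces $(\phi^+_\beta)'(d_i)$ on the diagonal closes the argument. Note that the paper itself does not supply a proof of this proposition but merely cites \cite[Lemma~2.1(b)]{wang2010solving}; your argument is precisely the standard derivation one finds behind such a citation, so there is no substantive difference in approach to discuss.
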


\section{Lipschitz continuity of the KKT solution mapping}\label{sec:kkt}
In this section, we will prove that the KKT solution mapping associated with the GGL problem 
is Lipschitz continuous. More generally, we emphasize that the Lipschitz continuity of the KKT solution mapping  still holds even if the GGL regularizer is replaced by any other convex positively homogeneous function, since
the key properties we need from the regularizer $\mathcal{P}$ are convexity and positive homogeneity.

The analysis in this section is based on Clarke's implicit function theorem.
For notational convenience, we denote
\begin{equation}\label{def-f}
f(\Theta):= \displaystyle\sum^K_{k=1} h(\Theta^{(k)})+\langle S^{(k)},\Theta^{(k)} \rangle,\,\,\,{\Theta\in\mathbb{Z}.}
\end{equation}
Then the GGL problem \eqref{model-MGL} can be equivalently reformulated as follows:
\begin{equation}\label{model-MGL-C}
\begin{array}{rl}
\min\limits_{\Omega,\Theta}& \displaystyle \left\{f(\Omega)+\mathcal{P}(\Theta)\,|\,\Omega-\Theta =0\right\}.
\end{array}
\end{equation}
The Lagrangian function associated with problem \eqref{model-MGL-C} is given by
$$
\mathcal{L}(\Omega,\Theta,X) := f(\Omega)+\mathcal{P}(\Theta) + \langle\Omega - \Theta,X\rangle,\,\,\,(\Omega,\Theta,X) \in\mathbb{Z}\times \mathbb{Z} \times \mathbb{Z}
$$
and the dual problem  of \eqref{model-MGL-C} is easily shown to be
\begin{equation}\label{model-MGL-Dual}
\max\limits_{X}~
\sum^K_{k=1} \big(\log\det (X^{(k)}+S^{(k)}) + p\big)-\mathcal{P}^*(X).
\end{equation}
In addition, the  KKT system associated with \eqref{model-MGL-C} and \eqref{model-MGL-Dual} is given by
\begin{equation}\label{def-KKT-prox}
\begin{array}{c}
X+{\rm Prox}_{f^*}(\Omega-X)=0,\,\,-X+{\rm Prox}_{\mathcal{P}^*}(\Theta+X)=0,\,\,
\Omega-\Theta=0.
\end{array}
\end{equation}
Since the log-determinant function $h$ is strictly convex and the solution set to problem \eqref{model-MGL} is assumed to be nonempty, problem \eqref{model-MGL-C} has a unique solution. Furthermore, by using \cite[Proposition 4.75]{bonnans2013perturbation} one can easily show that the KKT system \eqref{def-KKT-prox} also has a unique solution, denoted by $(\overline{\Omega},\overline{\Theta},\overline{X})$.

For any given $(U,V,W)\in \mathbb{Z}\times\mathbb{Z}\times\mathbb{Z}$, we consider the following linearly perturbed form of  problem \eqref{model-MGL-C}
\begin{equation}\label{model-MGL-P}
\begin{array}{rl}
\min\limits_{\Omega,\Theta} & \displaystyle f(\Omega)+\mathcal{P}(\Theta)-\langle (U,V),(\Omega,\Theta) \rangle\\[2mm]
{\rm s.t.} & \Omega-\Theta+W =0.
\end{array}
\end{equation}
As in Rockafellar \cite{rockafellar1976augmented}, we define  the following
maximal monotone operator:
$$
\begin{array}{l}
\mathcal{T}_{\mathcal{L}}(\Omega,\Theta,X)\\[2mm]
:=\{(U,V,W)\in \mathbb{Z}\times\mathbb{Z}\times\mathbb{Z}\,|\,(U,V,-W)\in\partial {\mathcal{L}}(\Omega,\Theta,X)\},\,\,\,(\Omega,\Theta,X)\in \mathbb{Z}\times\mathbb{Z}\times\mathbb{Z}.
\end{array}
$$
We also define the KKT solution mapping $\mathcal{S}:\mathbb{Z}\times\mathbb{Z}\times\mathbb{Z} \to \mathbb{Z}\times\mathbb{Z}\times\mathbb{Z}$ as
\begin{equation}\label{def-perb-KKT}
\mathcal{S}(U,V,W):=
\mathcal{T}_{\mathcal{L}}^{-1}(U,V,W)=
\hbox{the set of all KKT points for problem \eqref{model-MGL-P}}.
\end{equation}
Define a mapping $\mathcal{H}:(\mathbb{Z}\times\mathbb{Z}\times\mathbb{Z})
\times(\mathbb{Z}\times\mathbb{Z}\times\mathbb{Z}) \to \mathbb{Z}\times\mathbb{Z}\times\mathbb{Z}$  as follows: for any $(U,V,W)\in \mathbb{Z}\times\mathbb{Z}\times\mathbb{Z}$ and $(\Omega,\Theta,X)\in \mathbb{Z}\times\mathbb{Z}\times\mathbb{Z}$,
\begin{equation}\label{def-fun-H}
\mathcal{H}((U,V,W),(\Omega,\Theta,X))=\left(\begin{array}{c}
X-U+{\rm Prox}_{f^*}(\Omega-X+U)\\[1mm]
-X-V+{\rm Prox}_{\mathcal{P}^*}(\Theta+X+V)\\[1mm]
\Omega-\Theta+W
\end{array}
\right).
\end{equation}
Then it is easy to see that if
$\mathcal{S}(U,V,W)$ is nonempty, then it must be a singleton and satisfies
$\mathcal{H}((U,V,W),\mathcal{S}(U,V,W))=0.$
\begin{lemma}\label{lemma-sub-proxLogdet}
Let $Z\in \mathbb{Z}$ and $f$ be defined by \eqref{def-f}. Then
all  $\mathcal{G}_f\in\partial{\rm Prox}_f(Z)$ and $\mathcal{G}_{f^*}\in\partial{\rm Prox}_{f^*}(Z)$ are self-adjoint and  positive definite with $\lambda_{\max}({\mathcal{G}}_f)<1$ and $\lambda_{\max}(\mathcal{G}_{f^*})<1$.
\end{lemma}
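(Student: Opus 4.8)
The plan is to exploit the fact that $f$ is separable across the $K$ blocks and that, once the square is completed, its proximal mapping reduces to the proximal mapping of the log-determinant function, which by Proposition~\ref{phiprime} is actually continuously differentiable. First I would note that $f(\Theta)=\sum_{k=1}^K\big(h(\Theta^{(k)})+\langle S^{(k)},\Theta^{(k)}\rangle\big)$ is separable, so ${\rm Prox}_f(Z)$ decomposes blockwise. Completing the square in the defining minimization of each block gives
$$
({\rm Prox}_f(Z))^{(k)}=\arg\min_{B}\Big\{h(B)+\tfrac{1}{2}\|B-(Z^{(k)}-S^{(k)})\|^2\Big\}={\rm Prox}_h(Z^{(k)}-S^{(k)}),
$$
and by Proposition~\ref{prop:logdet} with $\beta=1$ this equals $\phi^+_1(Z^{(k)}-S^{(k)})$. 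Since Proposition~\ref{phiprime} asserts that $\phi^+_1$ is continuously differentiable, ${\rm Prox}_f$ is $C^1$, so the Clarke generalized Jacobian collapses to a singleton: $\partial{\rm Prox}_f(Z)=\{\mathcal{G}_f\}$, where $\mathcal{G}_f$ is the block-diagonal Fr\'echet derivative whose $k$-th block is $(\phi^+_1)'(Z^{(k)}-S^{(k)})$.

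Next I would analyze a single block. Writing $A:=Z^{(k)}-S^{(k)}=Q\,{\rm Diag}(d_1,\dots,d_p)Q^T$, the block operator is $B\mapsto Q(\Gamma\odot(Q^TBQ))Q^T$ with $\Gamma$ as in Proposition~\ref{phiprime}. Because the orthogonal conjugation $B\mapsto Q^TBQ$ is an isometry on $\mathbb{S}^p$ and $\Gamma$ is symmetric, this operator is self-adjoint; evaluating it on the eigenbasis $\{e_ie_j^T+e_je_i^T\}_{i\le j}$ shows that its eigenvalues are exactly the entries $\Gamma_{ij}$ for $i\le j$. The heart of the argument is then the two-sided bound $0<\Gamma_{ij}<1$: with $\beta=1$ one has
$$
\Gamma_{ij}=\frac{(\sqrt{d_i^2+4}+d_i)+(\sqrt{d_j^2+4}+d_j)}{2\big(\sqrt{d_i^2+4}+\sqrt{d_j^2+4}\big)},
$$
where positivity follows from $\sqrt{d^2+4}+d>0$ and the strict upper bound follows from $d<\sqrt{d^2+4}$. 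Since there are only finitely many such entries, this upgrades to $\lambda_{\max}(\mathcal{G}_f)=\max_k\max_{i\le j}\Gamma^{(k)}_{ij}<1$ together with strict positive definiteness.

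Finally, for $\mathcal{G}_{f^*}$ I would invoke the Moreau identity \eqref{moreau-id} with $\sigma=1$, namely ${\rm Prox}_f(u)+{\rm Prox}_{f^*}(u)=u$, so that ${\rm Prox}_{f^*}=I-{\rm Prox}_f$ is likewise $C^1$ and $\partial{\rm Prox}_{f^*}(Z)=\{\mathcal{G}_{f^*}\}$ with $\mathcal{G}_{f^*}=I-\mathcal{G}_f$. Its eigenvalues are $1-\Gamma_{ij}\in(0,1)$, yielding self-adjointness, positive definiteness, and $\lambda_{\max}(\mathcal{G}_{f^*})<1$. I expect the only genuinely delicate step to be the sharp bound $0<\Gamma_{ij}<1$ together with the finiteness observation that turns the pointwise strict inequalities into a strict bound on $\lambda_{\max}$; everything else is routine bookkeeping with the isometry $B\mapsto Q^TBQ$ and the Moreau decomposition.
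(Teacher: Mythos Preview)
Your argument is correct and is essentially the detailed unpacking of what the paper leaves implicit: the paper's own proof consists of a single sentence deferring to \cite[Lemma~2.1]{wang2010solving}, and your approach---reducing to the block $(\phi^+_1)'$, reading off the eigenvalues $\Gamma_{ij}$ via the isometry $B\mapsto Q^TBQ$, verifying $0<\Gamma_{ij}<1$ from the explicit formula, and then handling $\mathcal{G}_{f^*}$ via the Moreau identity---is precisely how one would carry out that derivation. No gap; you have supplied the self-contained proof the paper omits.
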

\begin{proof}
The proof can be derived from \cite[Lemma 2.1]{wang2010solving}.
\end{proof}
Since  the GGL regularizer $\mathcal{P}$ defined by \eqref{regularizer-GGL} is positively homogeneous, its conjugate function $\mathcal{P}^*$ is an indicator function of a closed convex set \cite[Example 11.4(a)]{rockafellar2009variational}. Therefore, ${\rm Prox}_{\mathcal{P}^*}$ is the projection onto a closed convex set.
We know further from \cite[Theorem 2.3]{sun2001solving} that for any $Y\in\mathbb{Z}$, any element in $\partial{\rm Prox}_{\mathcal{P}^*}(Y)$ is a self-adjoint operator whose eigenvalues are in the interval $[0,1]$. Thus, by the proof of \cite[Theorem 2.5]{sun2001solving}, we can obtain the following lemma, which will be used in Theorem \ref{th-nonsingular} to analyze the Lipschitz continuity of the KKT solution mapping $\mathcal{S}$ defined by \eqref{def-perb-KKT}.
\begin{lemma}\label{lemma-sub-proxGGL}
Let $Y\in\mathbb{Z}$ and $\mathcal{B}:\mathbb{Z}\to\mathbb{Z}$ be any self-adjoint positive definite operator. Then, for any chosen $\mathcal{G}_{\mathcal{P}^*}\in\partial{\rm Prox}_{\mathcal{P}^*}(Y)$,
the linear operator $I-\mathcal{G}_{\mathcal{P}^*}+\mathcal{G}_{\mathcal{P}^*}\mathcal{B}$ is nonsingular.
\end{lemma}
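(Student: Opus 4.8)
The plan is to use the two facts already established for the relevant operators just before the statement, namely that every $\mathcal{G}_{\mathcal{P}^*}\in\partial{\rm Prox}_{\mathcal{P}^*}(Y)$ is self-adjoint with spectrum contained in $[0,1]$. Writing $\mathcal{G}:=\mathcal{G}_{\mathcal{P}^*}$ and $\mathcal{M}:=I-\mathcal{G}+\mathcal{G}\mathcal{B}$, these facts say precisely that $\mathcal{G}\succeq 0$ \emph{and} $I-\mathcal{G}\succeq 0$. Since $\mathbb{Z}$ is finite dimensional, nonsingularity of $\mathcal{M}$ is equivalent to injectivity, so it suffices to show that $\mathcal{M}[d]=0$ forces $d=0$.

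The one structural difficulty is that $\mathcal{G}\mathcal{B}$ is in general \emph{not} self-adjoint (a product of two self-adjoint operators need not be self-adjoint), so one cannot simply test the equation against $d$ and read off a definite sign from the cross term. The device I would use to get around this is to pair the equation $\mathcal{M}[d]=0$ with two different vectors, $d$ and $\mathcal{B}d$, and then telescope the two resulting inequalities. First, taking the inner product with $d$ and using $\langle d,(I-\mathcal{G})d\rangle\ge 0$ together with the self-adjointness of $\mathcal{G}$, I would obtain $\langle \mathcal{G}d,\mathcal{B}d\rangle\le 0$. Second, taking the inner product with $\mathcal{B}d$, the key observation is that $\langle \mathcal{B}d,\mathcal{G}(\mathcal{B}d)\rangle\ge 0$ because $\mathcal{G}\succeq 0$; this forces $\langle \mathcal{B}d,(I-\mathcal{G})d\rangle\le 0$, that is, $\langle \mathcal{B}d,d\rangle\le \langle \mathcal{B}d,\mathcal{G}d\rangle$.

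Chaining these gives $\langle d,\mathcal{B}d\rangle\le\langle \mathcal{B}d,\mathcal{G}d\rangle\le 0$, where the last inequality is the first estimate rewritten using symmetry of the inner product. Since $\mathcal{B}$ is positive definite, $\langle d,\mathcal{B}d\rangle\ge 0$ with equality only at $d=0$, whence $d=0$ and $\mathcal{M}$ is injective, hence nonsingular. I expect the main point requiring care to be exactly the middle step: correctly isolating and signing the shared term $\langle \mathcal{B}d,\mathcal{G}d\rangle$, which occurs in both inequalities and is what allows the two estimates to be combined. This is the finite-dimensional operator analogue of the argument used in the proof of \cite[Theorem 2.5]{sun2001solving} referenced above, and it uses only $\mathcal{G}\succeq 0$, $I-\mathcal{G}\succeq 0$, and $\mathcal{B}\succ 0$, so no further structure of $\mathcal{P}^*$ is needed.
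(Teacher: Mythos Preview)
Your proof is correct and follows essentially the same route as the paper: the paper does not write out its own argument but defers to the proof of \cite[Theorem 2.5]{sun2001solving}, and what you have given is precisely a clean finite-dimensional rendition of that argument, using only $\mathcal{G}\succeq 0$, $I-\mathcal{G}\succeq 0$, and $\mathcal{B}\succ 0$. The two-pairing trick (testing $\mathcal{M}[d]=0$ against both $d$ and $\mathcal{B}d$) and the chaining $\langle d,\mathcal{B}d\rangle\le\langle \mathcal{B}d,\mathcal{G}d\rangle\le 0$ are exactly the mechanism behind the cited result, so there is no discrepancy to report.
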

The next theorem will play an essential role in establishing the linear rate of convergence of our proposed proximal point dual Newton algorithm (PPDNA) for solving the GGL problems in section \ref{linearrate-ppdna}.
\begin{theorem}\label{th-nonsingular}
Let $\mathcal{S}:\mathbb{Z}\times\mathbb{Z}\times\mathbb{Z} \to \mathbb{Z}\times\mathbb{Z}\times\mathbb{Z}$ be the KKT solution mapping defined by \eqref{def-perb-KKT}.
Then the following hold:
\begin{itemize}[topsep=1pt,itemsep=-.6ex,partopsep=1ex,parsep=1ex,leftmargin=5ex]
\item[(a)] Any element in $\partial_{(\Omega,\Theta,X)}\mathcal{H}((0,0,0),(\overline{\Omega},\overline{\Theta},\overline{X}))$ is nonsingular.  Here, we say\\ $\mathcal{G}\in \partial_{(\Omega,\Theta,X)}\mathcal{H}((0,0,0),(\overline{\Omega},\overline{\Theta},\overline{X}))$ if for some linear operator $\mathcal{M}$, it holds that  $(\mathcal{M},\mathcal{G})\in \partial\mathcal{H}((0,0,0),(\overline{\Omega},\overline{\Theta},\overline{X}))$.
\item[(b)] The mapping $\mathcal{S}$ is Lipschitz continuous near the origin; i.e., there exist a neighborhood $\mathcal{N}$ of the origin and a positive scalar $\kappa$ such that $\mathcal{S}(U,V,W)\neq \emptyset$ for any $(U,V,W)\in\mathcal{N}$ and
\begin{equation}\label{def-kappa}
\begin{array}{l}
\|\mathcal{S}(U,V,W)-\mathcal{S}(U',V',W')\|\\[1.5mm]
\leq \kappa\|(U,V,W)-(U',V',W')\|,\,\,\forall\,(U,V,W),(U',V',W')\in \mathcal{N}.
\end{array}
\end{equation}
\end{itemize}
\end{theorem}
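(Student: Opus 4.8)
The plan is to establish (a) by a direct kernel computation and then deduce (b) from (a) through Clarke's implicit function theorem.

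For part (a), I would first record the block structure of a generic element $\mathcal{G}\in\partial_{(\Omega,\Theta,X)}\mathcal{H}((0,0,0),(\overline{\Omega},\overline{\Theta},\overline{X}))$. Using the chain rule for Clarke generalized Jacobians together with the block-separable form of $\mathcal{H}$ in \eqref{def-fun-H} evaluated at $(U,V,W)=(0,0,0)$, every such $\mathcal{G}$ is determined by some $\mathcal{G}_{f^*}\in\partial{\rm Prox}_{f^*}(\overline{\Omega}-\overline{X})$ and some $\mathcal{G}_{\mathcal{P}^*}\in\partial{\rm Prox}_{\mathcal{P}^*}(\overline{\Theta}+\overline{X})$, acting on a direction $(d\Omega,d\Theta,dX)$ through the three rows $\mathcal{G}_{f^*}d\Omega+(I-\mathcal{G}_{f^*})dX$, $\mathcal{G}_{\mathcal{P}^*}d\Theta+(\mathcal{G}_{\mathcal{P}^*}-I)dX$, and $d\Omega-d\Theta$. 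Nonsingularity then reduces to showing the only direction annihilated by $\mathcal{G}$ is the zero direction.

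Setting $\mathcal{G}[(d\Omega,d\Theta,dX)]=0$, the third row gives $d\Omega=d\Theta$. Since $\mathcal{G}_{f^*}$ is positive definite, hence invertible, by Lemma \ref{lemma-sub-proxLogdet}, the first row lets me solve $d\Omega=(I-\mathcal{G}_{f^*}^{-1})dX$. Substituting this together with $d\Theta=d\Omega$ into the second row collapses the system to the single equation $(2\mathcal{G}_{\mathcal{P}^*}-\mathcal{G}_{\mathcal{P}^*}\mathcal{G}_{f^*}^{-1}-I)dX=0$, which I rewrite as $-(I-\mathcal{G}_{\mathcal{P}^*}+\mathcal{G}_{\mathcal{P}^*}\mathcal{B})dX=0$ with $\mathcal{B}:=\mathcal{G}_{f^*}^{-1}-I$. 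The crucial observation is that, because $\lambda_{\max}(\mathcal{G}_{f^*})<1$ by Lemma \ref{lemma-sub-proxLogdet}, the operator $\mathcal{B}$ is self-adjoint and positive definite. Lemma \ref{lemma-sub-proxGGL} then applies verbatim and guarantees that $I-\mathcal{G}_{\mathcal{P}^*}+\mathcal{G}_{\mathcal{P}^*}\mathcal{B}$ is nonsingular, forcing $dX=0$; feeding this back yields $d\Omega=d\Theta=0$. Thus $\mathcal{G}$ is nonsingular, proving (a).

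For part (b), I would verify that $\mathcal{H}((0,0,0),(\overline{\Omega},\overline{\Theta},\overline{X}))=0$, which is precisely the KKT system \eqref{def-KKT-prox} satisfied by the unique KKT point $(\overline{\Omega},\overline{\Theta},\overline{X})=\mathcal{S}(0,0,0)$, and that $\mathcal{H}$ is globally Lipschitz in both arguments, which follows from the nonexpansiveness of ${\rm Prox}_{f^*}$ and ${\rm Prox}_{\mathcal{P}^*}$. Combining this with the nonsingularity from part (a), Clarke's implicit function theorem produces a neighborhood $\mathcal{N}$ of the origin and a Lipschitz map $\psi$ with $\mathcal{H}((U,V,W),\psi(U,V,W))=0$ for all $(U,V,W)\in\mathcal{N}$. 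Since the KKT solution of the perturbed problem \eqref{model-MGL-P} is a singleton whenever nonempty and satisfies $\mathcal{H}((U,V,W),\mathcal{S}(U,V,W))=0$, the implicit map $\psi$ must coincide with $\mathcal{S}$ on $\mathcal{N}$, giving both the nonemptiness of $\mathcal{S}$ and the Lipschitz estimate \eqref{def-kappa}. The main obstacle I anticipate is the algebraic elimination in part (a): arranging it so that the residual single-variable operator lands exactly in the form $I-\mathcal{G}_{\mathcal{P}^*}+\mathcal{G}_{\mathcal{P}^*}\mathcal{B}$ covered by Lemma \ref{lemma-sub-proxGGL}, and recognizing that $\mathcal{B}=\mathcal{G}_{f^*}^{-1}-I$ is positive definite precisely because of the strict bound $\lambda_{\max}(\mathcal{G}_{f^*})<1$. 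A secondary technical point is justifying that every element of the partial Clarke Jacobian of the composite $\mathcal{H}$ has the claimed block form; here the product-rule inclusion for Clarke Jacobians makes the larger product set easy to control, and nonsingularity on that larger set is all that is required.
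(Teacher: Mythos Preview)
Your proposal is correct and follows essentially the same route as the paper: the kernel computation in (a) eliminates $d\Omega$ and $d\Theta$ via the first and third rows, reduces to $(I-\mathcal{G}_{\mathcal{P}^*}+\mathcal{G}_{\mathcal{P}^*}\mathcal{B})dX=0$ with $\mathcal{B}=\mathcal{G}_{f^*}^{-1}-I\succ 0$, and then invokes Lemma~\ref{lemma-sub-proxGGL}; part (b) is likewise deduced from (a), the Lipschitz continuity of $\mathcal{H}$, the singleton property of $\mathcal{S}$, and Clarke's implicit function theorem. The only cosmetic difference is that the paper additionally notes the directional differentiability of ${\rm Prox}_{\mathcal{P}^*}$ (inherited from ${\rm Prox}_{\mathcal{P}}$ via the Moreau identity) to justify the chain rule \cite[Lemma 2.1]{sun2006strong} giving the block form of $\mathcal{G}$, a point you flag but do not spell out.
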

\begin{proof}
Since ${\rm Prox}_{\mathcal{P}}$ is directionally differentiable, we know from the Moreau identity \eqref{moreau-id} that ${\rm Prox}_{\mathcal{P}^*}$ is also directionally differentiable. Therefore,  it follows from the chain rule presented in \cite[Lemma 2.1]{sun2006strong} that
for any $\mathcal{G}\in \partial_{(\Omega,\Theta,X)}\mathcal{H}((0,0,0),(\overline{\Omega},\overline{\Theta},\overline{X}))$, there exist
$\mathcal{G}_{f^*}\in \partial{\rm Prox}_{f^*}(\overline{\Omega}-\overline{X})$ and $\mathcal{G}_{\mathcal{P}^*}\in \partial{\rm Prox}_{\mathcal{P}^*}(\overline{\Theta}+\overline{X})$
such that
$$
{\mathcal{G}}(\Delta\Omega,\Delta\Theta,\Delta X)= \left(\begin{array}{c}
\Delta X+\mathcal{G}_{f^*}(\Delta\Omega-\Delta X)\\[1mm]
-\Delta X+\mathcal{G}_{\mathcal{P}^*}(\Delta\Theta+\Delta X)\\[1mm]
\Delta\Omega-\Delta\Theta
\end{array}
\right)
,\,\,\,\forall\, (\Delta\Omega,\Delta\Theta,\Delta X)\in \mathbb{Z}\times\mathbb{Z}\times\mathbb{Z}.
$$
Suppose that there exists  $(\Delta\Omega,\Delta\Theta,\Delta X)\in \mathbb{Z}\times\mathbb{Z}\times\mathbb{Z}$ such that ${\mathcal{G}}(\Delta\Omega,\Delta\Theta,\Delta X)=0$, i.e.,
\begin{equation}\label{def-kkt-eq-per}
\left\{\begin{array}{l}
\Delta X+\mathcal{G}_{f^*}(\Delta\Omega-\Delta X)=0,\\[1mm]
\Delta X-\mathcal{G}_{\mathcal{P}^*}(\Delta\Theta+\Delta X)=0,\\[1mm]
\Delta\Omega-\Delta\Theta=0.
\end{array}
\right.
\end{equation}
It follows from Lemma~\ref{lemma-sub-proxLogdet}  that both  $\mathcal{G}_{f^*}$ and
$\mathcal{B}:=\mathcal{G}^{-1}_{f^*}-I$ are self-adjoint and positive definite.
This, together with \eqref{def-kkt-eq-per}, implies that
\begin{equation}\label{th-sen-eq1}
\Delta\Omega = -\mathcal{B}\Delta X\,\,\,\hbox{and}\,\,\, \mathcal{C}\Delta X=0,
\end{equation}
where $\mathcal{C}:=I-\mathcal{G}_{\mathcal{P}^*}+\mathcal{G}_{\mathcal{P}^*}\mathcal{B}.$
We know from Lemma~\ref{lemma-sub-proxGGL} that $\mathcal{C}$ is nonsingular.
This, together with \eqref{def-kkt-eq-per} and \eqref{th-sen-eq1}, implies that
$$\Delta \Omega=0,\,\,\Delta \Theta =0,\,\,\hbox{and}\,\, \Delta X =0.$$
Therefore, $\mathcal{G}$ is nonsingular, and consequently the statement $(a)$ holds.

The global Lipschitz  continuities of the proximal mappings ${\rm Prox}_{f^*}$ and ${\rm Prox}_{\mathcal{P}^*}$ imply that the  mapping $\mathcal{H}$ defined by \eqref{def-fun-H} is Lipschitz continuous. Therefore, the proof of  $(b)$ can be obtained by $(a)$, the fact that for any $(U,V,W)\in\mathbb{Z}\times\mathbb{Z}\times \mathbb{Z}$, the set $\mathcal{S}(U,V,W)$ must be a singleton if it is nonempty,
and Clarke's implicit function theorem \cite[Page 256]{clarke1990optimization} or \cite[Theorem 3.6]{clarke1998nonsmooth}.
The proof is completed.
\end{proof}

\section{Proximal point dual Newton algorithm}\label{sec:ppdna}

We aim to  develop an implementable proximal point dual Newton algorithm (PPDNA) for solving the GGL problem \eqref{model-MGL-C}. The PPDNA is essentially a proximal point algorithm (PPA) for solving the primal form of the GGL model, and the PPA subproblems
are solved via their corresponding dual problems.
The dual of each subproblem is to maximize a concave function whose gradient is a semismooth function and thus can be solved
by the semismooth Newton  method. We begin this section by introducing the PPA  \cite{rockafellar1976monotone}, i.e., given $\Omega_0,\,\Theta_0\in\mathbb{Z}_{++}$ and $\sigma_0>0$, the updating scheme is given by
\begin{equation}\label{alg-ppa}
\left\{\begin{array}{l}
(\Omega_{t+1},\Theta_{t+1})\approx P_t(\Omega_{t},\Theta_t):= \arg\min\limits_{\Omega,\Theta}~\Phi_{\sigma_t}(\Omega,\Theta),\\
\sigma_{t+1}\uparrow\sigma_{\infty} \leq \infty,\,\,\,t=0,1,\ldots,
\end{array}
\right.
\end{equation}
where
\begin{equation}\label{def-Phi}
\begin{array}{c}
\Phi_{\sigma_t}(\Omega,\Theta):=f(\Omega)+\mathcal{P}(\Theta)+ \frac{1}{2\sigma_t}
\|(\Omega,\Theta)-(\Omega_{t},\Theta_t)\|^2+\delta_{\mathbb{F}}(\Omega,\Theta)
\end{array}
\end{equation}
with $\delta_{\mathbb{F}}$ being the indicator function of the set $\mathbb{F}:=\{(\Omega,\Theta)\in\mathbb{Z}\times\mathbb{Z}\,|\,\Omega-\Theta=0\}$, i.e., $\delta_{\mathbb{F}}(\Omega,\Theta)=0,$ if $(\Omega,\Theta)\in\mathbb{F}$, and $\delta_{\mathbb{F}}(\Omega,\Theta)=\infty,$ if $(\Omega,\Theta)\notin\mathbb{F}.$

We allow for inexactness in the updating scheme \eqref{alg-ppa} and apply the standard criteria proposed by Rockafellar \cite{rockafellar1976monotone} for controlling the inexactness: given nonnegative summable sequences $\{\varepsilon_t\}$ and $\{\gamma_t\}$ such that $\gamma_t<1$ for all $t\geq 0$,
$$
\begin{array}{lr}
\|(\Omega_{t+1},\Theta_{t+1}) - P_t(\Omega_{t},\Theta_t)\|
\;\leq\; \varepsilon_t,& ~~~~~~~{\rm (A)}
\\[3mm]
\|(\Omega_{t+1},\Theta_{t+1}) - P_t(\Omega_{t},\Theta_t)\|
\;\leq\; \gamma_t {\|(\Omega_{t+1},\Theta_{t+1}) - (\Omega_{t},\Theta_t)\|}.& ~~~~~~~{\rm (B)}
\end{array}
$$
Since the exact minimizer $P_t(\Omega_{t},\Theta_t)$ is typically unknown in each iteration, we should introduce practically implementable stopping criteria in place of (A) and (B) in the subsequent analysis.

\subsection{Dual based semismooth Newton method for solving PPA subproblems}
In this section, we aim to design the aforementioned dual based  semismooth Newton method for solving the subproblems of the PPA framework \eqref{alg-ppa}:
\begin{equation}\label{alg-ppa-subopt}
\begin{array}{rl}
\min\limits_{\Omega,\Theta}& f(\Omega)+\mathcal{P}(\Theta)+ \frac{1}{2\sigma_t}
\|(\Omega,\Theta)-(\Omega_{t},\Theta_t)\|^2\\[2mm]
{\rm s.t.} & \Omega-\Theta =0.
\end{array}
\end{equation}
The Lagrangian function associated with problem \eqref{alg-ppa-subopt} is given by
$$
\begin{array}{l}
\mathcal{L}_t(\Omega,\Theta,X):= f(\Omega)+\mathcal{P}(\Theta) + \langle\Omega - \Theta,X\rangle\\[1.5mm]
~~~~~~~~~~~~~~~~~~~~+\frac{1}{2\sigma_t}
\|\Omega-\Omega_t\|^2 + \frac{1}{2\sigma_t}
\|\Theta-\Theta_t\|^2,\,\,\, (\Omega,\Theta,X) \in\mathbb{Z}\times\mathbb{Z}\times\mathbb{Z}
\end{array}
$$
and the Lagrangian dual problem of \eqref{alg-ppa-subopt} is
\begin{equation}\label{model-dual-Reg}
\max\limits_{X} \Big\{\Upsilon_t(X):=\min\limits_{\Omega,\Theta}~\mathcal{L}_t(\Omega,\Theta,X)\Big\}.
\end{equation}
Since we aim to solve problem \eqref{alg-ppa-subopt} via its dual problem \eqref{model-dual-Reg}, a natural idea is to find the explicit expression for the dual objective function $\Upsilon_t$ first. The explicit expression for $\Upsilon_t$
can be obtained from the Moreau--Yosida regularization as follows:
$$
\begin{array}{l}
\Upsilon_t(X)\\[2mm]
= \min\limits_{\Omega}~\big\{f(\Omega)+\langle \Omega,X\rangle+\frac{1}{2\sigma_t}\|\Omega-\Omega_t\|^2\big\}+ \min\limits_{\Theta}~\big\{\mathcal{P}(\Theta)-\langle \Theta,X\rangle+\frac{1}{2\sigma_t}\|\Theta-\Theta_t\|^2\big\}\\[3mm]
= {\displaystyle \sum^{K}_{k=1}} \Big\{\frac{1}{\sigma_t}\Psi_{\sigma_th}\big(\Omega^{(k)}_t-\sigma_t(S^{(k)}+X^{(k)})\big)
-\frac{1}{2\sigma_t}\|\Omega^{(k)}_t-\sigma_t(S^{(k)}+X^{(k)})\|^2\Big\}\\[6mm]
~~~+ {\displaystyle \sum^{K}_{k=1}}\frac{1}{2\sigma_t}\|\Omega^{(k)}_t\|^2+  \frac{1}{\sigma_t}\Psi_{\sigma_t\mathcal{P}}\big(\Theta_t+\sigma_tX\big)-
\frac{1}{2\sigma_t}\|\Theta_t+\sigma_tX\|^2+\frac{1}{2\sigma_t}\|\Theta_t\|^2.
\end{array}
$$
The last equality is achieved when $\Omega^{(k)} = \phi_{\sigma_t}^+ \big(\Omega_t^{(k)} - \sigma_t(S^{(k)} + X^{(k)})\big)$, for $k=1,2,\ldots,K$, and
$\Theta ={\rm Prox}_{\sigma_t\mathcal{P}}({\Theta}_t + \sigma_tX)$.
One can find that $\Upsilon_t$ is continuously differentiable and strongly concave, and
the unique solution to problem \eqref{model-dual-Reg} can be obtained by solving the following nonsmooth system
\begin{equation} \label{newton-equation}
\nabla \Upsilon_t(X) = 0,
\end{equation}
where
$$
\nabla \Upsilon_t(X) = \big(
\phi_{\sigma_t}^+(W^{(1)}_{t}(X)), \ldots, \phi_{\sigma_t}^+(W^{(K)}_{t}(X)) \big)- {\rm Prox}_{\sigma_t \mathcal{P}}\big(V_{t}(X)\big)
$$
with
$$
W^{(k)}_{t}(X):=\Omega^{(k)}_t-\sigma_t(S^{(k)}+X^{(k)}),\,\,k=1,\ldots,K,\,\,\hbox{and}\,\,
V_{t}(X):={\Theta}_t + \sigma_tX.
$$
We can see that if $X=\arg\min_X \Upsilon_t(X) $, then one has that $\Omega=\Theta$ with $\Omega^{(k)} = \phi_{\sigma_t}^+ \big(W^{(k)}_{t}(X)\big)$, for $k=1,2,\ldots,K$,
$\Theta ={\rm Prox}_{\sigma_t\mathcal{P}}(V_t(X))$.
Recall that $\phi_{\sigma_t}^+(\cdot)$ is differentiable and its derivative is given by Proposition~{phiprime}. Thus, the surrogate generalized Jacobian $\widehat{\partial}(\nabla\Upsilon_t)(X){:\,\mathbb{Z}\rightrightarrows\mathbb{Z}}$ of $\nabla\Upsilon_t$ at any $X$ can be defined as follows:
\begin{equation*}
\left\{\begin{array}{l}
\mathcal{V}\in \widehat{\partial}(\nabla\Upsilon_t)(X) \hbox{ if and only if there exists ${\mathcal{W}}\in\widehat{\partial}{\rm Prox}_{\mathcal{P}}(V_{t}(X)/\sigma_t)$ such that}\\[1.5mm]
\hbox{for any $D\in\mathbb{Z}$,}\\[1.5mm]
\mathcal{V}[D] =- \sigma_t \Big((\phi^+_{\sigma_t})'(W_{t}^{(1)}(X))[D^{(1)}],
\dots,(\phi^+_{\sigma_t})'(W_{t}^{(K)}(X))[D^{(K)}]\Big)- \sigma_t {\mathcal{W}}[D].
\end{array}
\right.
\end{equation*}
Based on the surrogate generalized Hessian {$\widehat{\partial}(\nabla\Upsilon_t)(\cdot)$} of $\Upsilon_t$, one can apply  the following dual based semismooth Newton method (Algorithm \ref{alg-ssn}) for solving problem \eqref{model-dual-Reg}
via solving the nonsmooth equation \eqref{newton-equation}. To know more about the local and global methods for  nonsmooth equations, we refer the readers to \cite[Sections 7 \& 8]{facchinei2007finite} and references therein. The main computational cost of Algorithm \ref{alg-ssn} lies in Step~1 for finding the Newton direction. Therefore, we
carefully analyze  the second order sparsity structure in the surrogate generalized Jacobian and fully exploit
the structure to reduce the cost. Due to the computation of $\phi_{\sigma_t}^+(\cdot)$ in $\Upsilon_t$ and $\nabla\Upsilon_t$, the $j$-th iteration of Algorithm \ref{alg-ssn} requires $K m_j $ computations of eigenvalue decompositions.

\begin{algorithm}[h]
\caption{(${\rm DN}(\Omega_{t},\Theta_t,\sigma_t)$) Dual based Semismooth Newton Method. }
\label{alg-ssn}
\vspace{2mm}
Given  $\bar{\eta} \in (0,1)$, $\tau \in (0,1]$,
and $\mu \in (0,1/2)$, $\rho \in (0,1)$. Input $X_{t,0}\in\mathbb{Z}_{++}$.  Iterate the following steps for  $j=0,1,\dots$.\\
{\bf repeat}
\begin{itemize}[topsep=1pt,itemsep=-.1ex,partopsep=1ex,parsep=0.5ex,leftmargin=9ex]
\item[{\sf Step 1.}] (Newton direction) Choose a specific map {$\mathcal{V}_j\in \widehat{\partial}(\nabla\Upsilon_t)(X_{t,j})$}.
 Use the conjugate gradient (CG) method to find an approximate solution $D_j$ to
$$
\begin{array}{c}
{\mathcal{V}_j}[D] = -\nabla {\Upsilon}_t(X_{t,j})
\end{array}
$$
 such that $\| \mathcal{V}_j[D_j]+ \nabla {\Upsilon}_t(X_{t,j})\| \leq \min(\bar{\eta},\|\nabla {\Upsilon}_t(X_{t,j})\|^{1+\tau}).$
\item[{\sf Step 2.}] (Line search) Set $ \alpha_j = \rho^{m_j}$, where $m_j$ is the smallest nonnegative integer $m$ for which
$$
{\Upsilon}_t(X_{t,j} + \rho^m D_j) \geq{\Upsilon}_t(X_{t,j}) + \mu\rho^m \langle \nabla {\Upsilon}_t(X_{t,j}),D_j \rangle,
$$
and set $ X_{t,j+1} = X_{t,j} + \alpha_j D_j $.
\item[{\sf Step 3.}] (Primal-dual iterates) Compute the primal-dual iterates $(\widetilde{\Omega},\widetilde{\Theta},\widetilde{X})$:  
$$
\begin{array}{l}
  \widetilde{\Omega}^{(k)}  = \phi_{\sigma_t}^+\big(\Omega^{(k)}_t-\sigma_t(S^{(k)}+ \widetilde{X}^{(k)})\big),\,\,k=1,\ldots,K, \\[2mm]
\widetilde{\Theta}  =  \widetilde{\Omega},\,\,\,
  \widetilde{X} = X_{t,j+1}.
\end{array}
$$
\end{itemize}
\noindent
{\bf until} $(\widetilde{\Omega},\widetilde{\Theta},\widetilde{X})$ satisfies some given stopping conditions.\\[2mm]
{\bf return} $(\Omega_{t+1},\Theta_{t+1},X_{t+1}) = (\widetilde{\Omega},\widetilde{\Theta},\widetilde{X})$.
\end{algorithm}	

The following proposition states that Algorithm \ref{alg-ssn} for solving the dual of the PPA subproblem \eqref{model-dual-Reg} is globally convergent and locally superlinearly or even quadratically convergent if the parameter $\tau$ is chosen to be $1$.
\begin{proposition}
Let $\{X_{t,j}\}_{j\geq 0}$ be the infinite sequence generated by Algorithm \ref{alg-ssn}. Then  $\{X_{t,j}\}_{j\geq 0}$ converges to the unique optimal solution $\overline{X}_t$ of \eqref{model-dual-Reg}, and the convergence rate is at least superlinear:
$$\|X_{t,j+1}-\overline{X}_t \|=\mathcal{O}(\|X_{t,j} -{\overline{X}_t}\|^{1+\tau}),\,{\tau\in(0,1]}.$$
\end{proposition}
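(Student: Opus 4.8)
The plan is to recognize Algorithm~\ref{alg-ssn} as a globalized inexact semismooth Newton method for the nonsmooth equation $\nabla\Upsilon_t(X)=0$ in \eqref{newton-equation}, and then to verify the hypotheses of the classical convergence theory for such methods (see, e.g., \cite[Sections 7 \& 8]{facchinei2007finite}). Two ingredients are needed: global convergence of the line-search scheme to the unique zero $\overline{X}_t$, and a local superlinear rate driven by semismoothness of $\nabla\Upsilon_t$ together with nonsingularity of its generalized Jacobian at $\overline{X}_t$.

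The first step is to show that every $\mathcal{V}_j\in\widehat{\partial}(\nabla\Upsilon_t)(X_{t,j})$ is such that $-\mathcal{V}_j$ is self-adjoint and positive definite. From the defining formula, $-\mathcal{V}_j$ is the sum of a block-diagonal operator with blocks $\sigma_t(\phi^+_{\sigma_t})'(W^{(k)}_t(X_{t,j}))$ and the operator $\sigma_t\mathcal{W}$. Each block is self-adjoint and positive definite by Proposition~\ref{phiprime}, since $(\phi^+_{\sigma_t})'(A)[B]=Q(\Gamma\odot(Q^TBQ))Q^T$ with all entries $\Gamma_{ij}>0$, while $\mathcal{W}\in\widehat{\partial}{\rm Prox}_{\mathcal{P}}$ is self-adjoint and positive semidefinite by Proposition~\ref{prop-semismooth-GGL}. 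Hence $-\mathcal{V}_j$ is positive definite, and along the bounded sequence $\{X_{t,j}\}$ its smallest eigenvalue is bounded below by a positive constant by continuity of $(\phi^+_{\sigma_t})'$. This guarantees that the Newton system in Step~1 is solvable, that $D_j$ is an ascent direction for $\Upsilon_t$, and therefore that the Armijo line search in Step~2 is well defined and terminates in finitely many steps.

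For global convergence, I would use that $\Upsilon_t$ is continuously differentiable and strongly concave, so its superlevel sets are compact and the generated sequence stays bounded with at least one limit point. A standard line-search argument then forces $\nabla\Upsilon_t(X_{t,j})\to0$, and combined with the inexactness bound imposed in Step~1 and the uniform positive definiteness of $-\mathcal{V}_j$, every limit point is a zero of $\nabla\Upsilon_t$. Strong concavity makes this zero unique and equal to $\overline{X}_t$, so the whole sequence converges to $\overline{X}_t$.

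For the local rate, the key is that $\nabla\Upsilon_t$ is strongly semismooth at $\overline{X}_t$ with respect to $\widehat{\partial}(\nabla\Upsilon_t)$. The map $\phi^+_{\sigma_t}$ is a primary matrix function induced by a strongly semismooth scalar function and is continuously differentiable by Proposition~\ref{phiprime}, hence strongly semismooth; the term ${\rm Prox}_{\mathcal{P}}$ satisfies the second-order expansion ${\rm Prox}_{\mathcal{P}}(Y)-{\rm Prox}_{\mathcal{P}}(X)-\mathcal{W}[Y-X]=O(\|Y-X\|^2)$ of Proposition~\ref{prop-semismooth-GGL}, which is exactly strong semismoothness with respect to the surrogate Jacobian. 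Together with the nonsingularity established above, the standard local theorem yields that the unit step $\alpha_j=1$ is eventually accepted and that $\|X_{t,j+1}-\overline{X}_t\|=\mathcal{O}(\|X_{t,j}-\overline{X}_t\|^{1+\tau})$, the exponent being inherited from the residual tolerance $\|\nabla\Upsilon_t(X_{t,j})\|^{1+\tau}$ in Step~1. \textbf{The main obstacle} is precisely that one works with the \emph{surrogate} generalized Jacobian $\widehat{\partial}(\nabla\Upsilon_t)$ rather than the full Clarke Jacobian: one must confirm that the surrogate expansion for ${\rm Prox}_{\mathcal{P}}$ composes correctly with the derivative of $\phi^+_{\sigma_t}$ so the classical local theory still applies, and verify that the line search transitions to full steps near $\overline{X}_t$, which is the mechanism that lets the globalized iteration recover the pure Newton rate.
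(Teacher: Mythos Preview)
Your proposal is correct and follows essentially the same approach as the paper: verify strong semismoothness of $\nabla\Upsilon_t$ with respect to the surrogate Jacobian $\widehat{\partial}(\nabla\Upsilon_t)$ (via Proposition~\ref{prop-semismooth-GGL} for the ${\rm Prox}_{\mathcal{P}}$ part and Proposition~\ref{phiprime} for the $\phi^+_{\sigma_t}$ part), use strong concavity of $\Upsilon_t$ for uniqueness and global convergence, and then invoke the standard semismooth Newton convergence theory. The paper's proof is terser only because it packages all the ingredients you spell out---including the ``main obstacle'' of working with a surrogate rather than the Clarke Jacobian---into a single citation of \cite[Theorem~3]{li2017efficiently}, which is formulated precisely for semismoothness with respect to a user-supplied multifunction satisfying the expansion in Proposition~\ref{prop-semismooth-GGL}.
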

\begin{proof}
Since ${\rm Prox}_{\mathcal{P}}$ is directionally differentiable, it follows from Proposition~\ref{prop-semismooth-GGL} that
${\rm Prox}_{\mathcal{P}}$ is strongly semismooth with respect to the multifunction $\widehat{\partial}{\rm Prox}_{\mathcal{P}}$ in \eqref{jac-prox-p-GGL} (for its definition, see e.g., \cite[Definition 1]{li2017efficiently}). Therefore, the conclusion follows from the {strong concavity}
of $\Upsilon_t(\cdot)$, Proposition \ref{phiprime}, and \cite[Theorem 3]{li2017efficiently}.
\end{proof}

\subsection{Implementable stopping criteria for PPA subproblems}
Due to the lack of explicit forms of the exact solution $P_t(\Omega_{t},\Theta_t)$,
the stopping conditions (A) and (B) need to  be replaced by some implementable conditions.
Since $\Phi_{\sigma_t}$  defined by \eqref{def-Phi} is strongly convex with modulus $1/{2\sigma_t}$, one has the estimate
$$
\begin{array}{c}
\Phi_{\sigma_t}(\Omega_{t+1},\Theta_{t+1}) -  {\inf \Phi_{\sigma_t}(\Omega,\Theta)} \geq \frac{1}{2\sigma_t} \|(\Omega_{t+1},\Theta_{t+1}) - P_t(\Omega_{t},\Theta_t)\|^2,
\end{array}
$$
which implies that
$$
\begin{array}{c}
\|(\Omega_{t+1},\Theta_{t+1}) - P_t(\Omega_{t},\Theta_t)\| \leq \sqrt{2\sigma_t( \Phi_{\sigma_t}(\Omega_{t+1},\Theta_{t+1}) - \inf \Phi_{\sigma_t}(\Omega,\Theta))}.
\end{array}
$$
The unknown value $\inf \Phi_{\sigma_t}(\Omega,\Theta)$
can be replaced by any of its lower bounds converging to it. One choice is to consider the objective value of the dual problem \eqref{model-dual-Reg}. In particular, one has that
$$
\inf \Phi_{\sigma_t}(\Omega,\Theta)= \max \Upsilon_t(X) \geq \Upsilon_t(X_{t,j}),\,\,\forall\,j = 0,1,\ldots
$$
and hence
\begin{equation}\label{stop-cri-Ieq}
\|(\Omega_{t+1},\Theta_{t+1}) - P_t(\Omega_{t},\Theta_t)\| \leq \sqrt{2\sigma_t( \Phi_{\sigma_t}(\Omega_{t+1},\Theta_{t+1}) - \Upsilon_t(X_{t,j}))},\,\,j = 0,1,\ldots.
\end{equation}
Therefore, we can terminate  Algorithm \ref{alg-ssn} if $( \Omega_{t+1}, \Theta_{t+1},X_{t+1})$ satisfies the following conditions:
given nonnegative summable sequences $\{\varepsilon_t\}$ and $\{\gamma_t\}$ such that $\gamma_t<1$ for all $t\geq 0$,
$$
\begin{array}{ll}
\Phi_{\sigma_t}(\Omega_{t+1},\Theta_{t+1}) - \Upsilon_t(X_{t+1})
\;\leq\; {\varepsilon_t^2}/{2\sigma_t},& {\rm (A')}  
\\[3mm]
\Phi_{\sigma_t}(\Omega_{t+1},\Theta_{t+1}) - \Upsilon_t(X_{t+1})
\;\leq\; ({\gamma_t^2}/{2\sigma_t}) \|(\Omega_{t+1},\Theta_{t+1}) - (\Omega_{t},\Theta_{t})\|^2.  & {\rm (B')} 
\end{array}
$$

\subsection{Linear rate convergence of PPDNA}\label{linearrate-ppdna}
Now, we are ready to formally present the promised PPDNA for solving problem \eqref{model-MGL-C}.
\begin{algorithm}[H]
\caption{(PPDNA) Proximal Point Dual based Newton Algorithm.}\label{alg-ppdna}
\vspace{1mm}
Input $\Omega_0,\,\Theta_0\in\mathbb{Z}_{++}$ and $\sigma_0>0$. Iterate the following steps for $t=0,1,\dots$:
\begin{description}
 \item[{\sf Step 1.}] Apply Algorithm \ref{alg-ssn} to obtain
$$
\begin{array}{c}
(\Omega_{t+1},\Theta_{t+1},X_{t+1})={\rm DN}(\Omega_{t},\Theta_t,\sigma_t)
\end{array}
$$
{under stopping criterion ${\rm (A')}$ or ${\rm (B')}$}.
\vspace{0.5mm}
 \item[{\sf Step 2.}]Update $\sigma_{t+1}\uparrow\sigma_{\infty} \leq \infty$.
\end{description}
\end{algorithm}	
Along the line of Rockafellar's works \cite{rockafellar1976augmented,rockafellar1976monotone}, the local linear convergence rate of the primal and dual iterative sequences generated by the PPA can be guaranteed by the Lipschitz continuity of the KKT solution mapping near the origin under proper stopping criteria of the PPA subproblems. However, the Lipschitz property of the KKT solution mapping requires the uniqueness of the KKT point, and this property is not straightforward to establish when the regularizer $\mathcal{P}$ is not a piecewise linear-quadratic function.
As the property to ensure the linear convergence rate, especially the uniqueness assumption, is too restrictive to hold, Luque \cite{luque1984asymptotic} extended the results and proved the local linear convergence of the PPA under the local upper Lipschitz continuity (see e.g., \cite[p.~208]{robinson1981some} for the definition) of the KKT solution mapping at the origin \cite[p.~387]{cui2019r}.
The local upper Lipschitz continuity condition does not make the assumption on the uniqueness of the solution. However, the local upper Lipschitz continuity property may not hold when the KKT solution mapping is not piecewise polyhedral. Fortunately, for our GGL model, the strict convexity of the log-determinant function guarantees the uniqueness of the solution, and we prove in Theorem \ref{th-nonsingular} that the KKT solution mapping $\mathcal{S}$ of the GGL model (defined by \eqref{def-perb-KKT}) is Lipschitz continuous near the origin by taking advantage of the nice properties of the log-determinant function and Clarke's implicit function theorem. Therefore, the local linear convergence rate of the PPDNA can be obtained via  the classical results by Rockafellar.
The convergence results of Algorithm~\ref{alg-ppdna} for solving problem \eqref{model-MGL-C} are presented below.
\begin{theorem}\label{PPDNA-linear}
Let $\{(\Omega_{t},\Theta_{t},X_{t}) \}_{t\geq 0}$ be an infinite sequence generated by Algorithm~\ref{alg-ppdna} under stopping criterion ${\rm (A')}$. Then the sequence $\{(\Omega_{t},\Theta_{t})\}_{t\geq 0}$ converges to the unique solution $(\overline{\Omega},\overline{\Theta})$ of \eqref{model-MGL-C}, and the sequence $\{X_t\}_{t\geq 0}$ converges to the unique solution $\overline{X}$ of \eqref{model-MGL-Dual}.
Furthermore, if {both criteria ${\rm (A')}$ and ${\rm (B')}$ are} executed in Algorithm~\ref{alg-ppdna}, then there exists $\bar{t}\geq 0$ such that for all $t\geq \bar{t}$, {one has that}
\begin{equation*}
\begin{array}{c}
\|(\Omega_{t+1},\Theta_{t+1},X_{t+1})-(\overline{\Omega},\overline{\Theta},\overline{X})\|\leq \varrho_t\|(\Omega_{t},\Theta_{t},X_t)-(\overline{\Omega},\overline{\Theta},\overline{X})\|,
\end{array}
\end{equation*}
where the convergence rate is given by
$$
1>\varrho_t:=[\kappa(\kappa^2+\sigma^2_t)^{-1/2}+\gamma_t]/(1-\gamma_t)\rightarrow \varrho_{\infty}=\kappa(\kappa^2+\sigma^2_{\infty})^{-1/2}\,\,(\varrho_{\infty}=0\,\hbox{if}\,\,\sigma_{\infty}=\infty)
$$
and the parameter $\kappa$ is from \eqref{def-kappa}.
\end{theorem}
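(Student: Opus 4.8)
The plan is to recast the PPDNA as Rockafellar's inexact proximal point algorithm and then invoke his two classical convergence theorems \cite{rockafellar1976monotone,rockafellar1976augmented}. By the KKT conditions of subproblem \eqref{alg-ppa-subopt}, the primal--dual iterate $(\Omega_{t+1},\Theta_{t+1},X_{t+1})$ returned by one call of $\mathrm{DN}(\Omega_t,\Theta_t,\sigma_t)$ is the (approximate) proximal point associated with the maximal monotone operator $\mathcal{T}_{\mathcal{L}}$, whose unique zero is the KKT triple $(\overline{\Omega},\overline{\Theta},\overline{X})$ and whose inverse is the solution mapping $\mathcal{S}=\mathcal{T}_{\mathcal{L}}^{-1}$. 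The decisive structural fact --- that $\mathcal{S}$ is single-valued and Lipschitz continuous near the origin with modulus $\kappa$ --- is already supplied by Theorem~\ref{th-nonsingular} and the estimate \eqref{def-kappa}; this is exactly what makes the rate $\kappa(\kappa^2+\sigma_t^2)^{-1/2}$ appear. Consequently the remaining work is only to (i) translate the implementable stopping rules ${\rm (A')}$ and ${\rm (B')}$ into Rockafellar's abstract criteria ${\rm (A)}$ and ${\rm (B)}$, and (ii) read off the two conclusions.

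First I would establish ${\rm (A')}\Rightarrow{\rm (A)}$ and ${\rm (B')}\Rightarrow{\rm (B)}$ using the duality-gap bound \eqref{stop-cri-Ieq}, which is itself a consequence of the strong convexity (quadratic growth) of $\Phi_{\sigma_t}$ about its minimizer $P_t(\Omega_t,\Theta_t)$ together with weak duality $\inf\Phi_{\sigma_t}\ge \Upsilon_t(X_{t+1})$. Substituting ${\rm (A')}$ into \eqref{stop-cri-Ieq} gives $\|(\Omega_{t+1},\Theta_{t+1})-P_t(\Omega_t,\Theta_t)\|\le\varepsilon_t$, which is ${\rm (A)}$, and substituting ${\rm (B')}$ gives the relative bound with factor $\gamma_t$, which is ${\rm (B)}$. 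The delicate point --- and what I expect to be the main obstacle --- is to upgrade these primal estimates to statements about the whole triple: one must argue that controlling the primal proximal-objective gap also controls the distance of $(\Omega_{t+1},\Theta_{t+1},X_{t+1})$, including the by-product multiplier delivered by the dual Newton solve, to the exact proximal point of $\mathcal{T}_{\mathcal{L}}$. Here I would exploit that the dual residual is pinned down by the same optimality system and invoke the global Lipschitz continuity of ${\rm Prox}_{f^*}$ and ${\rm Prox}_{\mathcal{P}^*}$, which makes $\mathcal{H}$ in \eqref{def-fun-H} Lipschitz, to pass from primal to full primal--dual control.

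With the criteria in force, the first assertion follows from the global-convergence theorem of \cite{rockafellar1976monotone}: since the solution set of \eqref{model-MGL-C} is nonempty and the KKT point is unique (by strict convexity of $-\log\det$, as noted after \eqref{def-KKT-prox}), criterion ${\rm (A)}$ forces $\{(\Omega_t,\Theta_t,X_t)\}$ to converge to the unique zero $(\overline{\Omega},\overline{\Theta},\overline{X})$ of $\mathcal{T}_{\mathcal{L}}$, that is $(\Omega_t,\Theta_t)\to(\overline{\Omega},\overline{\Theta})$ and $X_t\to\overline{X}$. For the rate I would invoke the local linear-convergence theorem of \cite{rockafellar1976monotone} (cf.\ also \cite{luque1984asymptotic}): once the iterates enter the neighborhood $\mathcal{N}$ on which \eqref{def-kappa} holds, the exact proximal step contracts by $\kappa(\kappa^2+\sigma_t^2)^{-1/2}$ --- the combination of firm nonexpansiveness of the resolvent with the Lipschitz modulus $\kappa$ of $\mathcal{S}$ --- and criterion ${\rm (B)}$ inflates this to $\varrho_t=[\kappa(\kappa^2+\sigma_t^2)^{-1/2}+\gamma_t]/(1-\gamma_t)$ for all $t\ge\bar t$. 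It then remains only to verify $\varrho_t<1$ and the stated limit: since $\kappa(\kappa^2+\sigma_t^2)^{-1/2}<1$ and, by summability, $\gamma_t\to 0$ with $\gamma_t<1$, the factor $\varrho_t$ is eventually below $1$ and tends to $\kappa(\kappa^2+\sigma_\infty^2)^{-1/2}$, which equals $0$ when $\sigma_\infty=\infty$. This completes the plan.
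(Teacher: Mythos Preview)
Your plan matches the paper's proof: use \eqref{stop-cri-Ieq} to show ${\rm (A')}\Rightarrow{\rm (A)}$ and ${\rm (B')}\Rightarrow{\rm (B)}$, then invoke \cite[Theorems~1 and~2]{rockafellar1976monotone} together with Theorem~\ref{th-nonsingular}(b) and the uniqueness of the KKT point. One correction to your framing, however: the PPA in \eqref{alg-ppa} is applied to the \emph{primal} subdifferential $\partial(f+\mathcal{P}+\delta_{\mathbb{F}})$, not to the full KKT operator $\mathcal{T}_{\mathcal{L}}$ --- observe that $\Phi_{\sigma_t}$ in \eqref{def-Phi} carries no proximal term in $X$, and $X_{t+1}$ is only a by-product of the inner dual Newton solve, not part of the resolvent update. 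The Lipschitz modulus $\kappa$ of $\mathcal{S}$ from \eqref{def-kappa} can still be used, since the primal solution mapping is a component of $\mathcal{S}$ and hence inherits the same modulus. Regarding the ``delicate point'' you flag about upgrading primal estimates to the full triple: the paper's own proof is equally terse here and does not spell this out; Rockafellar's theorems, as cited, yield the contraction for $\{(\Omega_t,\Theta_t)\}$, while convergence of $\{X_t\}$ to $\overline{X}$ can be argued separately via the strong concavity of $\Upsilon_t$ and the uniqueness of the dual solution.
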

\begin{proof}
The global convergence of Algorithm~\ref{alg-ppdna} can be obtained from \eqref{stop-cri-Ieq}, \cite[Theorem 1]{rockafellar1976monotone}, and the uniqueness of the KKT point.
The linear rate of convergence can be derived from  \eqref{stop-cri-Ieq},  Theorem~\ref{th-nonsingular}${\rm (b)}$, and  \cite[Theorem 2]{rockafellar1976monotone}. The proof is completed.
\end{proof}

\subsection{Extensions of PPDNA}
Although the theoretical analysis and the algorithmic design presented in section 3 and section 4 focus on the GGL regularizer,   these results can also be applied
to the joint graphical model \eqref{model-MGL} with a different regularizer satisfying the following conditions:
\begin{itemize}[topsep=1pt,itemsep=-.6ex,partopsep=1ex,parsep=1ex,leftmargin=5ex]
\item[(a)] the regularizer is convex and positively homogenous, (e.g., a norm function);
\item[(b)] the proximal mapping associated with the regularizer can be efficiently computed and its surrogate generalized Jacobian can be explicitly characterized.
\end{itemize}
For example, we can show that both the pairwise fused graphical Lasso regularizer~\cite[Equation (5)]{danaher2014joint} and the sequential fused graphical Lasso regularizer~\cite[Formula (2.2)]{yang2015fused} satisfy the conditions (a) and (b). More specifically,
let $\lambda_1$ and $\lambda_2$ be positive parameters. The
pairwise fused graphical Lasso regularizer and sequential fused graphical Lasso regularizer are given as follows:
\begin{equation}\label{regularizer-PFGL}
\begin{array}{l}
\mathcal{P}_1(\Theta)\displaystyle = \lambda_1 \sum^K_{k=1} \sum_{i\neq j}|\Theta^{(k)}_{ij}| + \lambda_2 \sum_{k<k'} \sum_{i\neq j} |{\Theta^{(k)}_{ij}}-{\Theta^{(k')}_{ij}}|=\sum_{i\neq j} \varphi_1(\Theta_{[ij]}),
\end{array}
\end{equation}
where $\varphi_1(x)= \lambda_1\|x\|_1 + \lambda_2 \sum_{k < k'}|x_k -x_{k'}|,\,\,x\in\mathbb{R}^K$, and
\begin{equation}\label{regularizer-SFGL}
\begin{array}{l}
\mathcal{P}_2(\Theta)\displaystyle = \lambda_1 \sum^K_{k=1} \sum_{i\neq j}|\Theta^{(k)}_{ij}| + \lambda_2 \sum^K_{k=2} \sum_{i\neq j} |{\Theta^{(k)}_{ij}}-{\Theta^{(k-1)}_{ij}}|=\sum_{i\neq j}{\varphi_2}(\Theta_{[ij]}),
\end{array}
\end{equation}
where $\varphi_2(x)= \lambda_1\|x\|_1 + \lambda_2 \sum^K_{k=2}|x_{k}-x_{k-1}|,\,\,x\in\mathbb{R}^K.$

By applying the same procedure as in section 2.1 for the GGL regularizer, we can obtain
the proximal mappings associated with $\mathcal{P}_i,\,i=1,2$ and their surrogate generalized Jacobians from that of the clustered Lasso regularizer \cite{lin2019efficient} and the fused lasso regularizer \cite{li2017efficiently}, respectively. Therefore, we can apply our PPDNA framework for solving the joint graphical model with a
 different regularizer given by either \eqref{regularizer-PFGL} or \eqref{regularizer-SFGL}.

In addition to the direct extensions to the two regularizers above, the  PPDNA framework  is also applicable to joint graphical models with
other regularizers discussed in \cite{hallac2017network}. More specifically,
the regularizers  in \cite{hallac2017network} have the following form:
$$
\begin{array}{l}
\mathcal{P}(\Theta)\displaystyle = \mathcal{Q}_1(\Theta)+\mathcal{Q}_2(\Theta),
\end{array}
$$
where $
\mathcal{Q}_1(\Theta):=\lambda_1 \sum^K_{k=1} \sum_{i\neq j}|\Theta^{(k)}_{ij}|,\,\, \mathcal{Q}_2(\Theta):= \lambda_2 \sum^K_{k=2}\psi
(\Theta^{(k)}-\Theta^{(k-1)}).$
All the choices of the penalty function $\psi$ in \cite[Section 2.1]{hallac2017network} can ensure condition (a) except for the Laplcacian penalty $\psi(\cdot) = \|\cdot\|^2$. Therefore, except for the case of the Laplacian penalty, the PPDNA framework can be directly applied once condition (b) holds. For the exceptional case, we may slightly modify our framework. Specifically, each iteration should be modified as follows:
given $\Omega_0,\,\Theta_0,\,\Lambda_0\in\mathbb{Z}_{++}$ and $\sigma_0>0$, the updating scheme is given by
\begin{equation}\label{alg-ppa-2}
\left\{\begin{array}{l}
(\Omega_{t+1},\Theta_{t+1},\Lambda_{t+1})\approx P_t(\Omega_{t},\Theta_t,\Lambda_{t+1}):= {\arg\min\limits_{\Omega,\Theta,\Lambda}~\Phi_{\sigma_t}(\Omega,\Theta,\Lambda)},\\
\sigma_{t+1}\uparrow\sigma_{\infty} \leq \infty,\,\,\,t=0,1,\ldots,
\end{array}
\right.
\end{equation}
where
\[
\begin{array}{c}
\Phi_{\sigma_t}(\Omega,\Theta,\Lambda):=f(\Omega)+\mathcal{Q}_1(\Theta)+\mathcal{Q}_2(\Lambda)+ \frac{1}{2\sigma_t}
\|(\Omega,\Theta,\Lambda)-(\Omega_{t},\Theta_t,\Lambda_t)\|^2+\delta_{\mathbb{F}}(\Omega,\Theta,\Lambda),
\end{array}
\]
with $\delta_{\mathbb{F}}$ being the indicator function of the set
$$\mathbb{F}:=\{(\Omega,\Theta,\Lambda)\in\mathbb{Z}\times\mathbb{Z}\,|\,\Omega-\Theta=0,\, \Lambda-\Theta=0\}.$$
Then the resulting modified PPDNA can be obtained by using arguments similar to those in section 4.
But we should  mention that further investigation will be necessary to overcome the underlying difficulty
that the dual of the subproblems of  \eqref{alg-ppa-2} may not necessarily be strongly convex. We leave this part as our future
research topic.

\section{Numerical results}\label{sec:numerical}
In this section, we evaluate the performance of the PPDNA in comparison with the ADMM
and the proximal Newton-type method implemented in the work \cite{yang2015fused}, which is referred to as MGL\footnote{The solver is available at {\tt http://senyang.info/}.}.
All the experiments are performed in {\sc Matlab} (version 9.7) on a Windows workstation (24-core, Intel Xeon E5-2680 @ 2.50GHz, 128 GB of RAM).

\subsection{Implementation of ADMM}\label{sec-admm}

In this section, we briefly describe the ADMM for solving the dual problem \eqref{model-MGL-Dual}, which can be equivalently written as follows:
\begin{equation}\label{model-ADMM}
\begin{array}{cl}
\min\limits_{X,\,Z} & \displaystyle \left\{ \sum^K_{k=1} h(Z^{(k)})+ \mathcal{P}^*(X) \,\big|\, Z-X=S\right\}.
\end{array}
\end{equation}
Given a parameter $\sigma>0$,
the augmented Lagrangian function associated with \eqref{model-ADMM} is defined by
$$
\widehat{\mathcal{L}}_{\sigma} (X,Z,\Theta)=\sum^K_{k=1} h (Z^{(k)})+ \mathcal{P}^*(X)+\langle Z-X-S,\,\Theta\rangle+\displaystyle\frac{\sigma}{2}\|Z-X-S\|^2
$$
and the KKT optimality conditions are
\begin{equation*}
\Theta-{\rm Prox}_{\mathcal{P}}(\Theta+X)=0,\,\,
 Z - X - S=0,\,\,
Z^{(k)} - {\rm Prox}_h(Z^{(k)}-\Theta^{(k)})=0,\,\,\,k=1,\ldots,K.
\end{equation*}
The iterative scheme of the ADMM for problem \eqref{model-ADMM} can be described as follows: given $\tau\in(0,(1+\sqrt{5})/2)$ and an initial point $(X_0,Z_0,\Theta_0)\in\mathbb{Z}_{++}\times\mathbb{Z}_{++}\times\mathbb{Z}_{++}$,  the $t$-th iteration is given by
\begin{equation}\label{alg-ADMM}
\left\{\begin{array}{l}
X_{t+1}=
\arg\min_{X}~\widehat{\mathcal{L}}_{\sigma} (X,Z_t,\Theta_t),
\\[2mm]
Z_{t+1}=
\arg\min_{Z}~\widehat{\mathcal{L}}_{\sigma} (X_{t+1},Z,\Theta_t),
\\[2mm]
\Theta_{t+1}=
\Theta_t + \tau\sigma(Z_{t+1}-X_{t+1}-S),
\end{array}
\right.
\end{equation}
where $X_{t+1}$ can be updated by $X_{t+1}=(Z_{t}+ {\sigma}^{-1}\Theta_t-S)-{\rm Prox}_{\mathcal{P}}(Z_{t}+ {\sigma}^{-1}\Theta_t-S)$, and
$Z_{t+1} = (Z^{(1)}_{t+1},\dots,Z^{(K)}_{t+1})$ can be updated by
$Z^{(k)}_{t+1} = \phi^+_{{\sigma}^{-1}}\big(X^{(k)}_{t}-\frac{1}{\sigma}\Theta^{(k)}_t +S^{(k)} \big)$, $k = 1,\ldots,K$.

In the practical implementation, we tuned the parameter $\sigma$ according to the progress of primal and dual feasibilities (see e.g., \cite[Section 4.4]{lam2018fast}) and used a larger step-length  $\tau$ of $1.618$.  These two techniques can empirically accelerate the convergence speed.
It is worth noting that the ADMM implemented by Yang et al. \cite{yang2015fused} used a fixed penalty parameter $\sigma$ and the step-length $\tau=1$.

\subsection{Settings of experiments}\label{sec-stopcond}
{The experimental settings are the same as those in \cite[Section 4]{zhang2019efficiency}.}
We adopt the stopping criteria of PPDNA, ADMM and MGL as below. Let $\epsilon>0$ be a given tolerance. It is set as $ 10^{-6}$ in the following experiments.
\vspace{1mm}
\begin{itemize}[topsep=2pt,itemsep=-.6ex,partopsep=1ex,parsep=1ex,leftmargin=4ex]
\item The PPDNA is terminated if $\eta_P\leq \epsilon$, where
$$
\eta_P:=\max \left\{
\frac{\|\Theta-{\rm Prox}_{\mathcal{P}}(\Theta + X )\|}{1+\|\Theta\|},\,
\frac{\|\Theta - \Omega \|}{1+\|\Theta\|},\,
\frac{\|\Omega-{\rm Prox}^K_{h}(\Omega-S-X)\|}{1+\|\Omega\|}
\right\}
$$
with
$$
{\rm Prox}^K_h(\Theta):= (  {\rm Prox}_h(\Theta^{(1)}),  \dots,  {\rm Prox}_h(\Theta^{(K)})  )\in\mathbb{Z},\,\,\Theta\in\mathbb{Z}.
$$
\item The ADMM is terminated when $\eta_A\leq \epsilon$ or $20000$ iterations are taken, where
$$
\eta_A:=\max \left\{
\frac{\|\Theta-{\rm Prox}_{\mathcal{P}}(\Theta + X)\|}{1+\|\Theta\|},\,
\frac{\|Z - X- S \|}{1+\|S \|},\,
\frac{\|Z-{\rm Prox}^K_h(Z-\Theta)\|}{1+\|Z\|}
\right\}.
$$
\item The MGL is
terminated when the relative difference of its objective value with respect to the primal objective value obtained by the PPDNA is smaller than the given tolerance $\epsilon$  or the relative duality gap achieved by the PPDNA, i.e.,
$$
\Delta_M:= \frac{{\rm pobj}_{M} - {\rm pobj}_{P}}{1+|{\rm pobj}_{M}|+|{\rm pobj}_{P}|} < \max\{\epsilon, {\rm relgap}_P\},
$$
where ${\rm pobj}_P$, ${\rm pobj}_{M}$, and ${\rm relgap}_P$ are the objective values obtained by the PPDNA, the MGL, and the relative duality gap attained by the PPDNA.
\end{itemize}
It is worth mentioning that we adopt a warm-starting technique  in the initial stage of the PPDNA, instead of starting it from scratch. The warm-starting  procedure consists of first running the ADMM (with identity matrices as the starting point) for a fixed number of iterations (3000 steps in our experiments) or up to a given tolerance ($100 \epsilon$ in our experiments), and  then using the resulting approximate solution as  an initial point to warm-start the PPDNA. This idea is greatly motivated by two facts: 1) the ADMM can generate a solution of low to medium accuracy efficiently and might become slow when higher accuracy is required; 2) our algorithm PPDNA has been proven to be locally linearly convergent.
Therefore, the warm-starting technique can integrate the advantages of both ADMM and PPDNA.

We set the initial parameter in the stopping criterion ${\rm (A')}$  to be $\varepsilon_0=0.5$, and decrease it by a ratio $\varsigma>1$, i.e., $\varepsilon_{k+1} = \varepsilon_k/\varsigma$.
Likewise,  the parameter $\gamma_k$ in  the stopping criterion ${\rm (B')}$ is updated in the same fashion as that for $\varepsilon_k$.
For the parameters in Algorithm~\ref{alg-ssn}, we  simply set $\bar{\eta}=0.1$ and $\tau\in [0.1,0.2]$ according to \cite{zhao2010newton}. For the step on line search, we set $\mu=10^{-4}$ and $\rho=0.5$.

\subsection{Descriptions of datasets}
In this part, we describe the datasets which will be used later.
Since these datasets have been discussed in \cite{zhang2019efficiency}, we briefly review them for the ease of reading:
\vspace{1mm}
\begin{itemize}[topsep=1pt,itemsep=-.6ex,partopsep=1ex,parsep=1ex,leftmargin=4ex]
\item {\it  University webpages data set\footnote{{\tt http://ana.cachopo.org/datasets-for-single-label-text-categorization}}:} The original data was collected from computer science departments of various universities in 1997, manually classified into seven different classes: Student, Faculty, Course, Project, Staff, Department, and Other.
The data we use, consisting of the first four classes,  is preprocessed  by stemming techniques \cite{ana2007improving}. Two thirds of the pages were randomly chosen for training
({\it{Webtrain}}) and the remaining third for testing ({\em{Webtest}}).

\item {\it 20 newsgroups data set\footnote{\tt {http://qwone.com/$\sim$jason/20Newsgroups/}}:} This data set has 20 topics of newsgroup documents, and some of the topics are closely related to each other, while others are highly unrelated. 
    Four subgroups are named as {\it NGcomp}, {\it NGrec}, {\it NGsci}, and {\it NGtalk} accordingly and will be used in our experiments.

\item {\it  SPX500 component stocks\footnote{\tt{www.yahoo.com}}:} This data set contains the  daily returns of Standard \& Poor's 500 ({\it SPX500}) constituents from 2004 to 2014. We also test on extracted data from 2004 to 2006.
\end{itemize}

\subsection{Performance of PPDNA}
In this part, we first give an elementary report of the effectiveness of the GGL model  on synthetic nearest-neighbor networks generated by the mechanism in \cite{li2006gradient}. Second, we illustrate numerically  the local linear convergence of the PPDNA for solving two representative instances, in correspondence with Theorem \ref{PPDNA-linear} which shows theoretically the local linear convergence of the PPDNA.

\subsubsection{Synthetic data: nearest-neighbor networks}\label{sec-nn}
In this example, we choose $p=500$ and $K=3$. The synthetic precision  matrices, denoted as $\Sigma^{(k)},\,k=1,\dots,K$, are generated as follows. We first generate $p$ points on a unit square randomly, calculate their pairwise distances, and identify $5$ nearest neighbors of each point. The nearest-neighbor network is then obtained by linking any two points that are $5$ nearest neighbors of each other, and we denote the number of its edges as $N$. Subsequently, we obtain each $\Sigma^{(k)},\,k=1,2,3$ by adding extra edges to the common nearest-neighbor network. For each $k$, a pair of symmetric zero elements is randomly selected from the nearest-neighbor network and replaced with a value uniformly drawn from the interval $[-1,-0.5]\cup[0.5,1]$. $\Sigma^{(k)}$ is obtained after  this procedure is
repeated ceil$(N/4)$  times. We find in our simulation that the true number of edges in the three networks is $3690$.
Given the precision matrices, we draw $10000$ samples from each Gaussian distribution $\mathcal{N}_p(0,(\Sigma^{(k)})^{-1})$ to compute the sample covariance matrices.
Next we specify the tuning parameters $\lambda_1$ and $\lambda_2$. Following \cite{danaher2014joint}, we reparameterize $\lambda_1$ and $\lambda_2$ in order to separate the regularization for ``sparsity'' and for ``similarity'' since both parameters contribute to sparsity: $\lambda_1$ drives individual network edges to zero whereas $\lambda_2$ drives network edges to zero across all $K$ network estimates at the same time. We reparameterize them  in terms of
$
w_1 = \lambda_1 + \frac{1}{\sqrt{2}}\lambda_2,\,\,w_2 =  \frac{1}{\sqrt{2}}\lambda_2/(\lambda_1 + \frac{1}{\sqrt{2}}\lambda_2),
$
which are found in  \cite{danaher2014joint} to reflect the levels of sparsity and similarity regularization and are called   the sparsity and similarity control parameters, respectively.  In order to show the diversity of sparsity in our experiments, we change $w_1$  with $w_2$  fixed.
Figure \ref{fig-nn} characterizes the relative abilities of the GGL model  to recover the network structures and to detect change-points.

\begin{figure}[!h]
\centering
\begin{subfigure}[b]{.32\linewidth}
\centering
\includegraphics[width=1.0\textwidth]{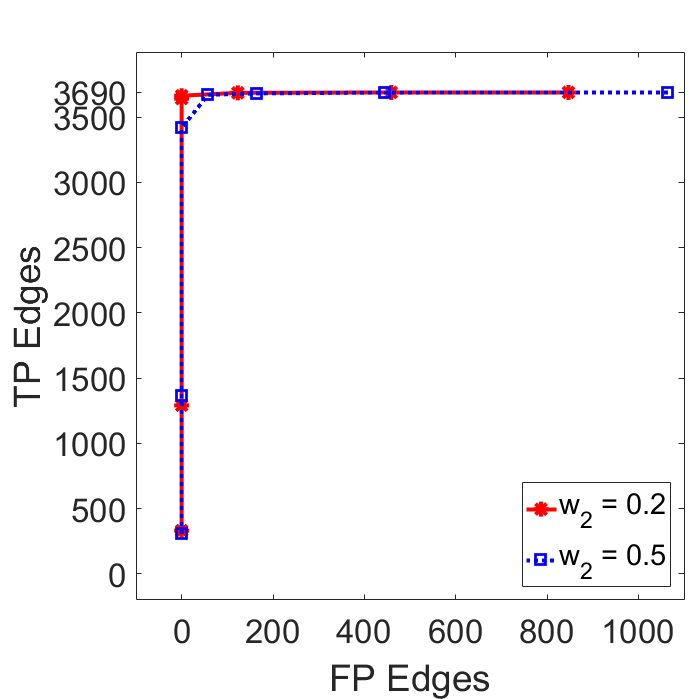}
\caption{}\label{figsub-4}
\end{subfigure}
\begin{subfigure}[b]{.32\linewidth}
\centering
\includegraphics[width=1.0\textwidth]{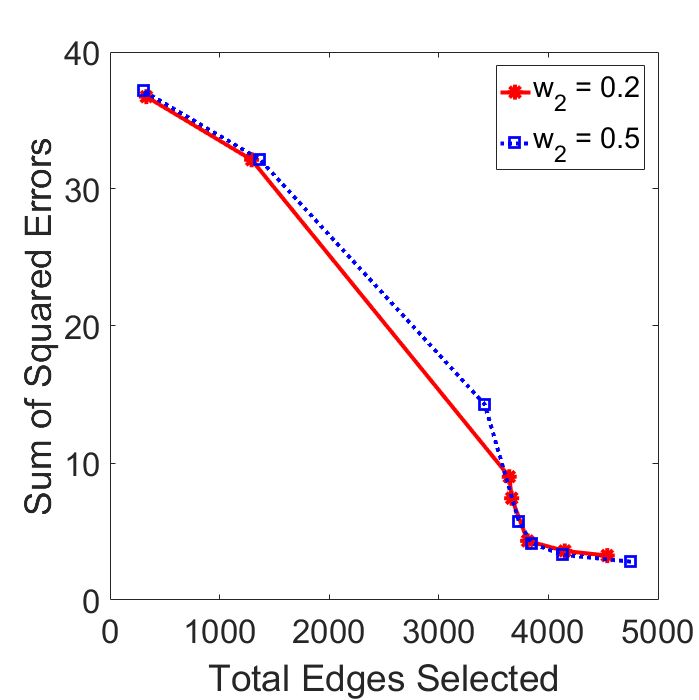}
\caption{}\label{figsub-5}
\end{subfigure}
\begin{subfigure}[b]{.32\linewidth}
\centering
\includegraphics[width=1.0\textwidth]{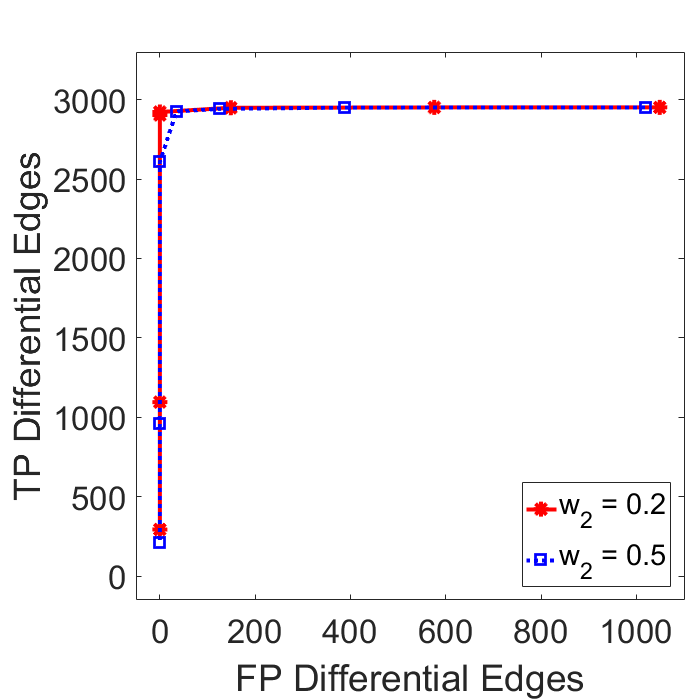}
\caption{}\label{figsub-6}
\end{subfigure}
\caption{Performances of  the GGL   model  on nearest-neighbor networks ($p=500$, $K=3$). (a) the  number of true positive edges versus the number of false positive edges; (b) the  sum of squared errors in edge values versus the total number of edges selected; (c) the  number of true positive differential edges versus  the number of false positive differential edges.}
  \label{fig-nn}
\end{figure}

Figure \ref{figsub-4} displays the number of true positive (TP) edges selected  against the number of false positive (FP) edges. We say that an edge $(i,j)$ is selected in the estimate $\overline{\Theta}^{(k)}$ if $\overline{\Theta}^{(k)}_{ij} \neq 0$, and  the edge is true if $\Sigma^{(k)}_{ij} \neq 0$ and false if $\Sigma^{(k)}_{ij} = 0$.
We can see that the model with $w_2=0.2$ can recover almost all of the TP edges without FP edges. This suggests that the GGL model is effective for recovering the edges in the nearest-neighbor networks.
Figure \ref{figsub-5} illustrates the sum of squared errors between estimated edge values and true edge values, i.e., $\sum_{k=1}^{K} \sum_{i < j}\big( \overline{\Theta}^{(k)}_{ij} - \Sigma^{(k)}_{ij}\big)^2$. When the number of the total edges selected {increases} (i.e., the sparsity control parameter $w_1$ {decreases}), the error {decreases} and finally reaches a fairly low value.
Figure \ref{figsub-6}  plots the number of TP differential edges against FP differential. An edge that differs between {networks} is called a  differential edge and thus corresponds to a change-point. Numerically, we say that the $(i,j)$ edge is estimated to be differential between the $k$-th and the $(k+1)$-th networks if $|\overline{\Theta}^{(k)}_{ij} - \overline{\Theta}^{(k+1)}_{ij} | > 10^{-6}$, and we say that it is truly differential if $| \Sigma^{(k)}_{ij} - \Sigma^{(k+1)}_{ij} |> 10^{-6}$.
The number of differential edges is computed for all successive pairs of networks.
One can observe in Figure \ref{figsub-6} that the results obtained with $w_2=0.2$ have approximately 3000 TP differential edges and almost no false ones.
This suggests that the GGL model can be a suitable model to use in change-point detection of nearest-neighbor networks.

\subsubsection{Linear rate convergence}
The purpose of  this section is to demonstrate numerically the local linear convergence of the PPDNA. Specifically, we conduct experiments on two representative instances: (a)  categorical data: {\it Webtrain} with $(p,K) = (300,4) $, $(\lambda_1,\lambda_2)=(5$e-$3,5$e-$4)$; (b) time-varying data: {\it SPX500} with $(p,K) = (200,11) $, $(\lambda_1,\lambda_2)=(5$e-$4,5$e-$5)$.
Due to the lack of exact optimal solutions of these instances, we run the PPDNA until the accuracy of $10^{-10}$ is achieved and regard the resulting approximate solution as the true solution $(\overline{\Omega},\overline{\Theta},\overline{X})$. We denote
$$
d_t :=\frac{\|\overline{\Omega}_t - \overline{\Omega}\| + \|\overline{\Theta}_t - \overline{\Theta}\| + \|\overline{X}_t - \overline{X}\|}{\|\overline{\Omega}\| + \|\overline{\Theta}\| + \|\overline{X}\|},\,\,t=0,1,\dots.
$$
In Figure \ref{fig-rate},
we plot $\log_{10} d_t$ against the iteration count $t$ under two different choices of the penalty parameter $\sigma_t$: $\sigma_t$ is fixed or increased by a ratio. When $\sigma_t$ is fixed,
the solid blue line in the figure indicates that the convergence rate is almost constant. When $\sigma_{t+1}=1.3\sigma_t$, i.e., the penalty parameter is gradually increasing, the dash-dotted red line shows that the convergence rate is increasingly fast. The observation
is consistent with Theorem \ref{PPDNA-linear}, which demonstrates numerically the local linear convergence rate of the PPDNA.
We should emphasize that the impressive linear convergence rate depicted in
the solid blue curve in  Figure~\ref{fig-rate}(a) is attained with $\sigma_t$ fixed at a large value of $10^8$, whereas the slower initial convergence shown in the dash-dotted red curve is due to slowly increasing the parameter $\sigma_t$ from a small initial value of $2\times10^4$. The same remark is also applicable to Figure~\ref{fig-rate}(b).

The dependence of the linear rate of convergence on $\sigma_t$ also sheds light on the choice of $\sigma_t$ in our implementation. Basically we adaptively update $\sigma_t$
to strike a good balance in the trade-off between the convergence rate of the PPDNA  and the difficulty
in computing the Newton directions (via the CG method) in the semismooth Newton method  (Step~1 of Algorithm \ref{alg-ssn}). As the condition number of the Newton linear system in Step~1 of Algorithm \ref{alg-ssn}
is proportional to $\sigma_t$, the CG method will converge more slowly for a larger  $\sigma_t$.
Thus in our experiments, we start from a small $\sigma_0$, e.g., $\sigma_0=1$, and gradually increase $\sigma_t$ by some factor $\zeta> 1$, i.e., $\sigma_{t+1} = \zeta\sigma_t$.

\begin{figure}[H]
\centering
\begin{subfigure}[b]{.4\linewidth}
\centering
\includegraphics[width=1.0\textwidth]{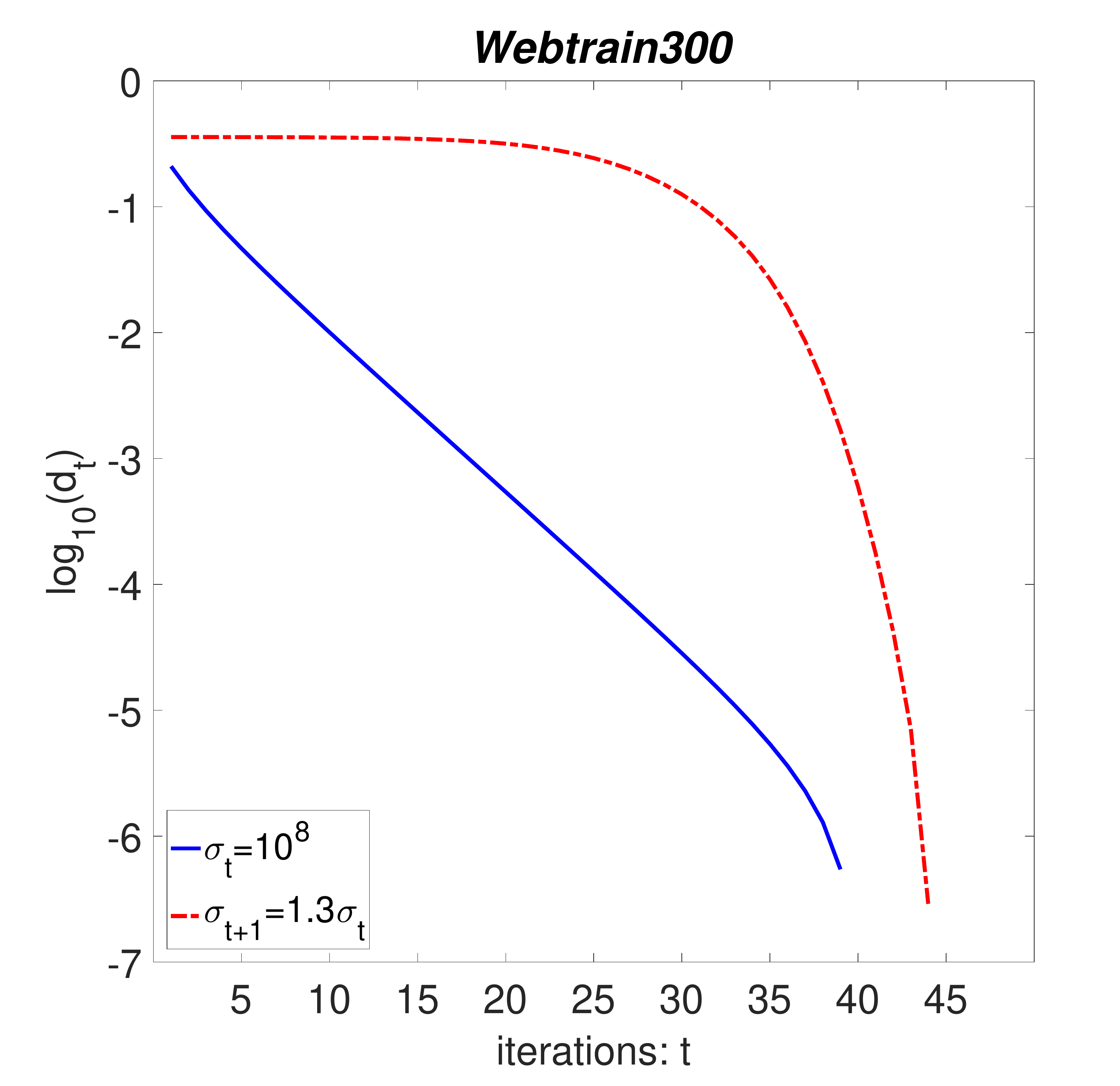}
\caption{}\label{figsub-7}
\end{subfigure}
\begin{subfigure}[b]{.4\linewidth}
\centering
\includegraphics[width=1.0\textwidth]{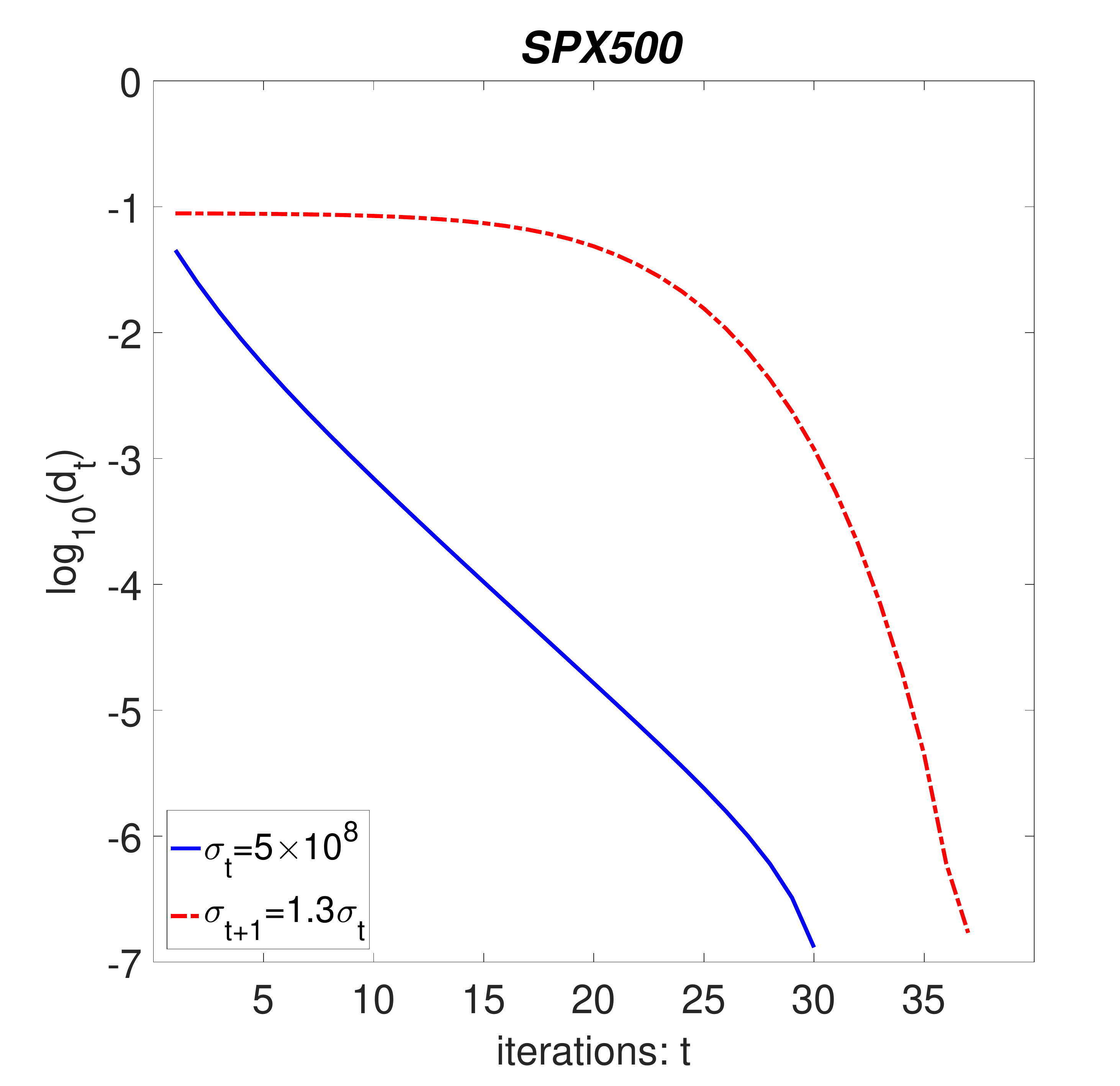}
\caption{}\label{figsub-8}
\end{subfigure}
\caption{The relative distances of the iterates generated by the PPDNA to the optimal solution. (a)  Webtrain with $(p,K) = (300,4) $, $(\lambda_1,\lambda_2)=(5$e-$3,5$e-$4)$; (b) SPX500 with $(p,K) = (200,11) $, $(\lambda_1,\lambda_2)=(5$e-$4,5$e-$5)$.} \label{fig-rate}
\end{figure}


\subsection{Comparison with ADMM and MGL}\label{sec-web}
In this section, we compare our algorithm PPDNA for solving the GGL model with the ADMM  described in \eqref{alg-ADMM} and the MGL implemented in \cite{yang2015fused}. For the tuning parameters $\lambda_1$ and $\lambda_2$, we select three pairs  for each instance that produce reasonable sparsity. In the following tables, ``P'' stands for PPDNA; ``A'' stands for ADMM; ``M'' stands for MGL.
In the column under ``Iteration'', we report the number of iterations taken by various
algorithms. In particular, for the PPDNA, we report the number of the PPA iterations taken
and the
number (within the parentheses) of semismooth Newton linear systems solved.
Let ``nnz'' denote the number of nonzero entries in the solution $\Theta$ obtained by the PPDNA using the following estimation:
$ {\rm nnz}:= \min \{k\,|\,\sum_{i=1}^{k}|\hat{x}_i| \geq 0.999\|\hat{x}\|_1\}, $
where $\hat{x}\in\mathbb{R}^{p^2K}$ is the vector obtained by sorting all elements of $\Theta$ by magnitude in a descending order. In the tables,
``density'' denotes the quantity nnz$/(p^2K)$.
The time is displayed in the format of ``hours:minutes:seconds'', and the fastest method is highlighted in red.
The error reported for the PPDNA in the tables is the relative KKT residual $\eta_P$. That of the ADMM is $\eta_A$; while the error for the MGL is $\Delta_M$.

Table \ref{table-web}  shows the comparison of three methods PPDNA, ADMM, and MGL on the university webpages data sets.  The PPDNA successfully solved all instances in Table \ref{table-web} within about one minute. For a large majority of tested instances, the PPDNA is  faster than the ADMM and the MGL.
It suggests that the PPDNA is robust and efficient for solving the GGL model applied to the university webpages data.

Table \ref{table-ng300}  presents the comparison of PPDNA, ADMM, and MGL  on the 20 newsgroups data sets. One can see clearly that the PPDNA outperforms the ADMM and the MGL for most instances in Table \ref{table-ng300}.
It demonstrates that the PPDNA can be efficient for solving the GGL model. For some difficult instances, e.g., {\it NGcomp} train $(\lambda_1,\lambda_2)=(5$e-$4,5$e-$5)$, our PPDNA took less than one minute while the MGL took more than one hour. Again, the results show that
our PPDNA is robust for solving the GGL model. The superior performance of our PPDNA can
primarily be attributed to our ability to extract and exploit the sparsity structure (in $\widehat{\partial}{\rm Prox}_{\mathcal{P}}$) within the semismooth Newton method to solve the PPA subproblems very efficiently.

Table \ref{table-spx} gives the results  on the Standard \& Poor's 500 component stock price data set {\it{SPX500}}. The table shows that the PPDNA is faster than both the ADMM and the MGL for all instances.
In addition, we find that both the PPDNA and the ADMM succeeded in solving all instances, while the MGL failed to solve one of them within three hours. This might imply that the MGL is not robust for solving the GGL model when applied to the stock price data sets. The numerical results show convincingly that our algorithm PPDNA can solve the GGL problem highly efficiently and robustly.

\begin{table}[H]\centering
\caption{Performances of PPDNA, ADMM, and MGL on university webpages data.}\label{table-web}
\setlength{\tabcolsep}{1.2mm}{\scriptsize
\begin{tabular}{lllccccccccc}
\toprule
Problem & $(\lambda_1,\lambda_2)$ & Density & \multicolumn{3}{c}{Iteration} & \multicolumn{3}{c}{Time} & \multicolumn{3}{c}{Error} \\
\cmidrule(l){4-6} \cmidrule(l){7-9} \cmidrule(l){10-12}
$(p,K)$ & &  & P & A & M & P & A & M & P & A & M\\
\midrule
                          &      (1e-02,1e-03)  & 0.016  & 16(24)  & 501  & 4  & {\color{red} 02}  & 04  & 10  & 3.2e-07  & 9.9e-07  & 2.9e-07 \\
 {\it Webtest}            &      (5e-03,5e-04)  & 0.048  & 16(25)  & 501  & 6  & {\color{red} 02}  & 04  & 13  & 3.2e-07  & 9.9e-07  & 2.6e-07 \\
 (100,4)                  &      (1e-03,1e-04)  & 0.225  & 14(22)  & 529  & 32  & {\color{red} 02}  & 04  & 56  & 2.4e-07  & 9.9e-07  & 1.0e-06 \\
&&&&&&&&&&&\\[-0.1cm]
                          &      (1e-02,1e-03)  & 0.008  & 14(24)  & 850  & 5  & {\color{red} 08}  & 25  & 37  & 7.9e-07  & 1.0e-06  & 6.7e-08 \\
 {\it Webtest}            &      (5e-03,5e-04)  & 0.026  & 14(27)  & 679  & 7  & {\color{red} 10}  & 19  & 50  & 7.9e-07  & 9.8e-07  & 4.1e-07 \\
 (200,4)                  &      (1e-03,1e-04)  & 0.163  & 13(23)  & 503  & 77  & {\color{red} 07}  & 11  & 05:57  & 2.7e-07  & 9.9e-07  & 1.6e-06 \\
&&&&&&&&&&&\\[-0.1cm]
                          &      (5e-03,5e-04)  & 0.016  & 14(29)  & 744  & 8  & {\color{red} 28}  & 33  & 02:39  & 5.6e-07  & 9.9e-07  & 5.3e-08 \\
 {\it Webtest}            &      (1e-03,1e-04)  & 0.125  & 16(32)  & 487  & 205  & 45  & {\color{red} 22}  & 21:51  & 5.9e-07  & 9.9e-07  & 1.8e-06 \\
 (300,4)                  &      (5e-04,5e-05)  & 0.256  & 14(35)  & 668  & 1128  & 55  & {\color{red} 30}  & 01:37:51  & 3.9e-07  & 9.9e-07  & 2.1e-06 \\
&&&&&&&&&&&\\[-0.1cm]
                          &      (1e-02,1e-03)  & 0.012  & 20(34)  & 1601  & 3  & {\color{red} 03}  & 11  & 08  & 1.2e-07  & 1.0e-06  & 6.0e-06 \\
 {\it Webtrain}           &      (5e-03,5e-04)  & 0.033  & 20(34)  & 1601  & 5  & {\color{red} 03}  & 12  & 04  & 1.2e-07  & 1.0e-06  & 7.4e-07 \\
 (100,4)                  &      (1e-03,1e-04)  & 0.165  & 20(34)  & 1601  & 22  & {\color{red} 04}  & 13  & 43  & 1.2e-07  & 1.0e-06  & 7.8e-06 \\
&&&&&&&&&&&\\[-0.1cm]
                          &      (5e-03,5e-04)  & 0.016  & 20(39)  & 1325  & 7  & {\color{red} 13}  & 27  & 01:18  & 1.3e-07  & 1.0e-06  & 1.3e-06 \\
 {\it Webtrain}           &      (1e-03,1e-04)  & 0.108  & 20(37)  & 1397  & 31  & {\color{red} 11}  & 31  & 02:49  & 9.9e-08  & 1.0e-06  & 5.0e-06 \\
 (200,4)                  &      (5e-04,5e-05)  & 0.219  & 20(39)  & 1397  & 88  & {\color{red} 16}  & 32  & 05:00  & 1.1e-07  & 1.0e-06  & 5.3e-06 \\
&&&&&&&&&&&\\[-0.1cm]
                          &      (5e-03,5e-04)  & 0.011  & 20(62)  & 1826  & 10  & {\color{red} 01:04}  & 01:37  & 08:35  & 2.4e-07  & 9.7e-07  & 2.7e-06 \\
 {\it Webtrain}           &      (1e-03,1e-04)  & 0.080  & 19(33)  & 1196  & 45  & {\color{red} 21}  & 52  & 09:55  & 3.4e-07  & 1.0e-06  & 6.0e-06 \\
 (300,4)                  &      (5e-04,5e-05)  & 0.177  & 19(36)  & 1196  & 134  & {\color{red} 36}  & 52  & 13:00  & 3.7e-07  & 1.0e-06  & 6.0e-06 \\
\bottomrule
\end{tabular}}
\end{table}

\begin{table}[H]
\centering
\caption{Performances of PPDNA, ADMM, and MGL  on  20 newsgroups data. }\label{table-ng300}
\setlength{\tabcolsep}{1.2mm}{\scriptsize\begin{tabular}{lllccccccccc}
\toprule
Problem & $(\lambda_1,\lambda_2)$ & Density & \multicolumn{3}{c}{Iteration} & \multicolumn{3}{c}{Time} & \multicolumn{3}{c}{Error} \\
\cmidrule(l){4-6} \cmidrule(l){7-9} \cmidrule(l){10-12}
 $(p,K)$ & &  & P & A & M & P & A & M & P & A & M\\
\midrule
 {\it NGcomp}          &        (5e-03,5e-04)  & 0.021  & 15(22)  & 509  & 31  & {\color{red} 16}  & 26  & 37:08  & 6.5e-08  & 9.9e-07  & 1.1e-06 \\
 test                  &        (1e-03,1e-04)  & 0.099  & 16(26)  & 625  & 510  & {\color{red} 32}  & 34  & 01:20:37  & 7.9e-07  & 1.0e-06  & 2.0e-06 \\
 (300,5)               &        (5e-04,5e-05)  & 0.210  & 14(24)  & 494  & 1481  & 40  & {\color{red} 27}  & 03:00:00  & 7.2e-07  & 1.0e-06  & 6.1e-06 \\
&&&&&&&&&&&\\[-0.1cm]
 {\it NGrec}           &        (5e-03,5e-04)  & 0.004  & 21(38)  & 1331  & 5  & {\color{red} 15}  & 49  & 04:04  & 8.0e-08  & 1.0e-06  & 4.9e-07 \\
 test                  &        (1e-03,1e-04)  & 0.063  & 21(39)  & 1331  & 13  & {\color{red} 20}  & 58  & 04:28  & 8.2e-08  & 1.0e-06  & 1.9e-06 \\
 (300,4)               &        (5e-04,5e-05)  & 0.143  & 20(37)  & 1331  & 36  & {\color{red} 20}  & 58  & 07:49  & 3.7e-07  & 1.0e-06  & 3.7e-06 \\
&&&&&&&&&&&\\[-0.1cm]
 {\it NGsci}           &        (5e-03,5e-04)  & 0.006  & 17(26)  & 542  & 6  & {\color{red} 14}  & 21  & 05:25  & 3.8e-07  & 1.0e-06  & 2.1e-06 \\
 test                  &        (1e-03,1e-04)  & 0.075  & 17(27)  & 553  & 21  & {\color{red} 19}  & 24  & 11:17  & 3.9e-07  & 9.8e-07  & 2.1e-06 \\
 (300,4)               &        (5e-04,5e-05)  & 0.167  & 17(31)  & 550  & 87  & 25  & {\color{red} 24}  & 17:13  & 5.0e-07  & 9.9e-07  & 2.6e-06 \\
&&&&&&&&&&&\\[-0.1cm]
 {\it NGtalk}          &        (5e-03,5e-04)  & 0.026  & 15(25)  & 482  & 16  & {\color{red} 24}  & 26  & 26:08  & 9.2e-08  & 9.6e-07  & 4.1e-07 \\
 test                  &        (1e-03,1e-04)  & 0.115  & 12(23)  & 278  & 81  & 20  & {\color{red} 13}  & 13:39  & 1.2e-07  & 9.9e-07  & 1.1e-06 \\
 (300,3)               &        (5e-04,5e-05)  & 0.240  & 11(22)  & 286  & 337  & 25  & {\color{red} 13}  & 40:28  & 9.4e-08  & 9.7e-07  & 2.2e-06 \\
&&&&&&&&&&&\\[-0.1cm]
 {\it NGcomp}          &        (5e-03,5e-04)  & 0.016  & 16(31)  & 1150  & 13  & {\color{red} 33}  & 57  & 14:58  & 1.2e-07  & 1.0e-06  & 5.6e-08 \\
 train                 &        (1e-03,1e-04)  & 0.080  & 15(31)  & 1153  & 172  & {\color{red} 35}  & 01:04  & 40:11  & 4.6e-07  & 1.0e-06  & 1.9e-06 \\
 (300,5)               &        (5e-04,5e-05)  & 0.153  & 15(30)  & 1216  & 574  & {\color{red} 33}  & 01:07  & 01:12:51  & 4.4e-07  & 1.0e-06  & 1.8e-06 \\
&&&&&&&&&&&\\[-0.1cm]
 {\it NGrec}           &        (5e-03,5e-04)  & 0.005  & 19(35)  & 1519  & 5  & {\color{red} 22}  & 52  & 02:36  & 1.4e-07  & 1.0e-06  & 3.9e-07 \\
 train                 &        (1e-03,1e-04)  & 0.068  & 18(37)  & 1500  & 16  & {\color{red} 31}  & 01:06  & 09:45  & 2.6e-07  & 1.0e-06  & 4.8e-06 \\
 (300,4)               &        (5e-04,5e-05)  & 0.124  & 18(35)  & 1542  & 48  & {\color{red} 28}  & 01:07  & 09:02  & 2.9e-07  & 1.0e-06  & 5.4e-06 \\
&&&&&&&&&&&\\[-0.1cm]
 {\it NGsci}           &        (5e-03,5e-04)  & 0.011  & 17(30)  & 1387  & 10  & {\color{red} 21}  & 54  & 10:00  & 1.7e-07  & 1.0e-06  & 3.5e-08 \\
 train                 &        (1e-03,1e-04)  & 0.086  & 16(32)  & 1389  & 40  & {\color{red} 32}  & 01:01  & 08:57  & 5.1e-07  & 1.0e-06  & 2.6e-06 \\
 (300,4)               &        (5e-04,5e-05)  & 0.152  & 16(32)  & 965  & 206  & {\color{red} 37}  & 42  & 18:10  & 4.1e-07  & 9.9e-07  & 2.9e-06 \\
&&&&&&&&&&&\\[-0.1cm]
 {\it NGtalk}          &        (5e-03,5e-04)  & 0.026  & 18(32)  & 2445  & 13  & {\color{red} 26}  & 01:41  & 13:24  & 2.6e-07  & 1.0e-06  & 1.9e-06 \\
 train                 &        (1e-03,1e-04)  & 0.103  & 17(32)  & 2448  & 52  & {\color{red} 19}  & 01:46  & 13:26  & 1.4e-07  & 1.0e-06  & 4.4e-06 \\
 (300,3)               &        (5e-04,5e-05)  & 0.204  & 17(38)  & 2385  & 213  & {\color{red} 28}  & 01:45  & 19:43  & 7.2e-08  & 1.0e-06  & 4.9e-06 \\
\bottomrule
\end{tabular}}
\end{table}
\begin{table}[H]
\centering
\caption{Performances of PPDNA, ADMM, and MGL  on stock price data. }\label{table-spx}
\setlength{\tabcolsep}{1.2mm}{\scriptsize\begin{tabular}{lllccccccccc}
\toprule
Problem & $(\lambda_1,\lambda_2)$ & Density & \multicolumn{3}{c}{Iteration} & \multicolumn{3}{c}{Time} & \multicolumn{3}{c}{Error} \\
\cmidrule(l){4-6} \cmidrule(l){7-9} \cmidrule(l){10-12}
 $(p,K)$ & &  & P & A & M & P & A & M & P & A & M\\
\midrule
                        &      (1e-04,1e-05)  & 0.039  & 22(33)  & 3644  & 6  & {\color{red} 04}  & 22  & 28  & 1.1e-07  & 1.0e-06  & 9.8e-07 \\
{\it SPX500}            &      (5e-05,5e-06)  & 0.138  & 22(35)  & 3646  & 8  & {\color{red} 05}  & 23  & 25  & 1.5e-07  & 1.0e-06  & 8.4e-06 \\
(100,3)                 &      (2e-05,2e-06)  & 0.238  & 23(43)  & 2056  & 18  & {\color{red} 08}  & 13  & 08  & 4.4e-07  & 1.0e-06  & 2.5e-05 \\
&&&&&&&&&&&\\[-0.1cm]
                        &      (1e-04,1e-05)  & 0.025  & 24(31)  & 1409  & 8  & {\color{red} 12}  & 23  & 01:20  & 8.8e-08  & 1.0e-06  & 9.4e-06 \\
{\it SPX500}            &      (5e-05,5e-06)  & 0.084  & 21(28)  & 1239  & 17  & {\color{red} 14}  & 20  & 04:23  & 9.4e-08  & 1.0e-06  & 9.0e-06 \\
(200,3)                 &      (2e-05,2e-06)  & 0.150  & 20(38)  & 1363  & 32  & {\color{red} 18}  & 23  & 03:28  & 1.4e-07  & 9.9e-07  & 2.0e-05 \\
&&&&&&&&&&&\\[-0.1cm]
                        &      (5e-04,5e-05)  & 0.030  & 22(29)  & 3701  & 11  & {\color{red} 12}  & 01:01  & 02:20  & 4.3e-08  & 1.0e-06  & 5.4e-06 \\
{\it SPX500}            &      (1e-04,1e-05)  & 0.127  & 22(30)  & 3722  & 105  & {\color{red} 18}  & 01:21  & 02:34  & 9.3e-08  & 1.0e-06  & 7.6e-06 \\
(100,11)                &      (5e-05,5e-06)  & 0.206  & 22(30)  & 2925  & 393  & {\color{red} 21}  & 01:06  & 09:12  & 7.8e-07  & 1.0e-06  & 7.8e-06 \\
&&&&&&&&&&&\\[-0.1cm]
                        &      (5e-04,5e-05)  & 0.018  & 19(24)  & 1096  & 28  & {\color{red} 31}  & 53  & 36:07  & 8.4e-07  & 1.0e-06  & 4.1e-06 \\
{\it SPX500}            &      (1e-04,1e-05)  & 0.082  & 19(24)  & 1125  & 481  & {\color{red} 49}  & 01:08  & 01:27:17  & 7.9e-07  & 1.0e-06  & 5.1e-06 \\
(200,11)                &      (5e-05,5e-06)  & 0.140  & 19(27)  & 1101  & 1258  & {\color{red} 01:05}  & 01:08  & 03:00:00  & 6.1e-07  & 1.0e-06  & 1.7e-05 \\
\bottomrule
\end{tabular}}
\end{table}

\section{Concluding remarks}\label{sec:conclusion}
In this paper, we have taken advantage of the ideas proposed in \cite{li2017efficiently,Zhang2018efficient} and implemented a proximal point dual Newton algorithm~(PPDNA) to the primal formulation of the group graphical Lasso problems. From a theoretical standpoint, we have shown that the PPDNA is globally convergent and the sequence of primal and dual iterates is Q-linearly convergent, although the group graphical Lasso regularizer is non-polyhedral. The robustness and  superior numerical efficiency of the PPDNA are convincingly demonstrated in various numerical experiments. Therefore, we can firmly conclude that the PPDNA is not only a fast method with nice theoretical guarantees, but also a numerically efficient method for solving the group graphical Lasso problems with multiple precision matrices.

\end{document}